\title{On The Random Tur\'an number of linear cycles}
\author
{Dhruv Mubayi\thanks{Department of Mathematics, Statistics, and Computer Science, University of Illinois, Chicago, IL, 60607 USA. Email: mubayi@uic.edu.
Research partially supported by NSF Awards DMS-1763317,
DMS-1952767 and DMS-2153576, by a Humboldt Research Award, and by a Simons Fellowship.}
\and  Liana Yepremyan\thanks{
    Department of Mathmatics, Emory University, 
    Atlanta, Georgia, 30322, USA. Email: \texttt{lyeprem}@\texttt{emory.edu}.
    }
}
\date{\today}
\newcommand{\eps}{\ensuremath{\varepsilon}}
\newtheorem{theorem}{Theorem}[section] 
\newtheorem{claim}[theorem]{\bf Claim}
\newtheorem{lemma}[theorem]{\bf Lemma}
\newtheorem{conjecture}[theorem]{\bf Conjecture}
\newtheorem{proposition}[theorem]{\bf Proposition}
\newcommand\numberthis{\addtocounter{equation}{1}\tag{\theequation}}
\newcommand\numeq[1]%
\newcommand{\Prob}{\mathbb{P}}
\newcommand{\E}{\mathbb{E}}
\newcommand{\ex}{{\rm ex}}
\newcommand{\Ext}{{\rm Ext}}
\newcommand{\plog}{{\rm plog}}
\begin{document}
\setlength{\baselineskip}{12pt}

\maketitle
\begin{abstract}  
Given two $r$-uniform hypergraphs $G$ and $H$ the Tur\'an number $\ex(G, H)$   is the  maximum number of edges in an  $H$-free subgraph of $G$.   We study the typical value of  $\ex(G, H)$ when $G=G_{n,p}^{(r)}$, the Erd\H{o}s-R\'enyi random $r$-uniform hypergraph, and $H=C_{2\ell}^{(r)}$, the $r$-uniform linear cycle of length $2\ell$. The case of graphs ($r=2$) is a longstanding open problem that has been investigated by many researchers. We  determine the order of magnitude of  $\ex\left(G_{n,p}^{(r)}, C_{2\ell}^{(r)}\right)$ for  all $r\geq 4$  and all $\ell\geq 2$ up to polylogarithmic factors for all values of $p=p(n)$.
 Our proof is based on the container method and uses a balanced supersaturation result for linear even cycles which  improves upon previous such results by Ferber-Mckinley-Samotij  and Balogh-Narayanan-Skokan.
\end{abstract}
\begin{section}{Introduction}
 Given two  $r$-uniform hypergraphs $G$ and $H$ (henceforth $r$-graphs), $G$ is $F$-free if $G$ contains no (not necessarily induced) subgraph isomorphic to $H$.  The Tur\'an number $\ex(G, H)$  is the   maximum of $e(G')$ over all $H$-free subgraphs of $G' \subseteq G$. When $G=K_{n}^{(r)}$, the complete $r$-graph on $n$ vertices, $\ex(G, H)$ is simply denoted by $\ex(n,H)$. Determining $\ex(n,H)$  and its order of magnitude for large $n$ is a central problem in extremal graph theory, known as the Tur\'an problem of $H$. For more on Tur\'an numbers, we refer the reader to the excellent surveys~\cite{FS} for graphs and~\cite{keevash} for hypergraphs.

A hypergraph is linear if every two edges have at most one vertex in common. In this paper we study $\ex(G,H)$ when $G$ is the random $r$-graph $G_{n,p}^{(r)}$, and $H$ is a linear even $r$-uniform cycle. Here $G_{n,p}^{(r)}$ is the $r$-graph  on $n$ labelled vertices whose edges are independently present with probability $p=p(n)$.  The random variable $\ex(G_{n,p},H) = \ex(G_{n,p}^{(2)}, H)$ was first considered by
Babai, Simonovits, and Spencer~\cite{BSS} who treated the case in which $H$ has chromatic
number three and $p$ is a constant. The systematic study of $\ex(G_{n,p},H)$  was initiated by Kohayakawa, Luczak and R\"{o}dl~\cite{KLR} (see the survey~\cite{RS} for more extremal results in random graphs). One of their  conjectures, resolved independently by Conlon
and Gowers~\cite{CG} and by Schacht~\cite{S}, determines the asymptotic value of $\ex(G_{n,p}, H)$ whenever $H$
has chromatic number at least three.

The behaviour of $\ex(G_{n,p}, H)$ when $H$ is bipartite is a wide open problem that is closely  related to  the order of magnitude of the usual Tur\'an numbers $\ex(n, H)$. One case of bipartite $H$ that has been extensively studied is when $H=C_{2\ell}$, the even cycle on $2\ell$ vertices. Haxell, Kohayakawa and \L{}uczak~\cite{HKL} determined the so-called \emph{threshold}  $p$ for $H=C_{2\ell}$.  Namely they showed that a.a.s. if $p\gg n^{-1+1/(2\ell-1)} $ then $\ex(G_{n,p}, C_{2\ell})\ll e(G_{n,p})$, and if $p\ll n^{-1+1/(2\ell-1)}$  then  $\ex(G_{n,p},C_{2\ell})=(1-o(1))e(G_{n,p})$. Kohayakawa, Kreuter and Steger~\cite{KKS} improved on the second part and obtained more precise bounds for a certain range of $p$.
%they showed that if $p=\alpha n^{-1+1/(2\ell-1)}$ for $2\leq \alpha \leq n^{1/(2\ell-1)^2}$, then %a.a.s. $\ex(G_{n,p}, C_{2\ell})=\Theta\left(n^{1+1/(2\ell-1)} (\log{\alpha})^{1/(2\ell-1)}\right)$.
Finally, using the powerful hypergraph container method invented independently by Balogh, Morris and Samotij~\cite{BMS} and  Saxton and Thomasson~\cite{ST}, Morris and Saxton~\cite{MS} further improved the upper bounds on  $\ex(G_{n,p}, C_{2\ell})$ for a broader range of $p$.

\begin{theorem} [\cite{HKL, KKS, MS}] 
\label{thm:graphcycles}For every $\ell\geq 2$, there exists  $C=C(\ell)$ such that a.a.s.
$$ \ex\left(G_{n,p}, C_{2\ell}\right) \leq \begin{cases} n^{1+1/(2\ell-1)} (\log{n})^{2},  &   \mbox{if } p \leq n^{-(\ell-1)/(2\ell-1)}(\log{n})^{2\ell} ,\\
Cp^{1/\ell}n^{1+1/\ell},& \mbox{otherwise.} \end{cases}$$

\end{theorem}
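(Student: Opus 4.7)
The plan is to apply the hypergraph container method to control $C_{2\ell}$-free subgraphs of $K_n$. Consider the auxiliary $(2\ell)$-uniform hypergraph $\mathcal{H}$ on the ground set $E(K_n)$ whose edges are the edge sets of copies of $C_{2\ell}$; independent sets in $\mathcal{H}$ are exactly the edge sets of $C_{2\ell}$-free graphs on $[n]$. The goal is to produce a family $\mathcal{C}$ of subsets of $E(K_n)$ so that (i) every $C_{2\ell}$-free subgraph of $K_n$ is contained in some $C \in \mathcal{C}$; (ii) each $|C| \le f(n) := C_0 n^{1+1/\ell}(\log n)^{O(1)}$; and (iii) $\log|\mathcal{C}| \le n^{1+1/(2\ell-1)}(\log n)^{O(1)}$.

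The main engine, and the principal obstacle, is a \emph{balanced supersaturation} lemma for even cycles: for any $n$-vertex graph $G$ with $e(G) = m \ge K n^{1+1/\ell}$ (with $K$ polylogarithmic), $G$ contains at least $c(m/n)^{2\ell}$ copies of $C_{2\ell}$, and moreover these copies are well distributed, in the sense that for each $1 \le j \le 2\ell - 1$ and every $j$-edge set $S \subseteq E(G)$, the number of copies of $C_{2\ell}$ whose edge set contains $S$ is at most $(m/n^{1+1/\ell})^{-j}(\log n)^{O(1)}$ times the average. I would establish this via the Bondy--Simonovits path-counting strategy: first pass to an almost-regular subgraph of degree $d \asymp m/n$, then bound the typical number of walks of length $\ell$ between pairs of vertices using H\"older's inequality, and finally prune walks that self-intersect or concentrate too heavily on a single edge via a deletion/potential argument. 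The subtlety is enforcing the balance condition uniformly over $j$, since a naive count loses control of cycles passing through exceptional edges.

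With balanced supersaturation in hand, I would iterate the container theorem of Balogh--Morris--Samotij and Saxton--Thomason. Starting from the single container $E(K_n)$, each round refines any container $C$ with $|C| > f(n)$ into a polylogarithmic number of strictly smaller containers; the fingerprint produced in each round has size at most $n^{1+1/(2\ell-1)}(\log n)^{O(1)}$, so after $O(\log n)$ rounds every container has size at most $f(n)$, while the logarithm of the total number of containers still satisfies (iii).

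The final step is a Chernoff union bound. For each $C \in \mathcal{C}$, $|E(G_{n,p}) \cap C|$ is binomial with mean $p|C| \le p f(n)$, so with probability at least $1 - \exp(-c x)$ it is at most $p|C| + x$. Choosing $x = n^{1+1/(2\ell-1)}(\log n)^2$ makes the error term absorb $\log|\mathcal{C}|$, giving, after union bounding, $\max_{C \in \mathcal{C}} |E(G_{n,p}) \cap C| \le p f(n) + n^{1+1/(2\ell-1)}(\log n)^2$ a.a.s. Since every $C_{2\ell}$-free subgraph of $G_{n,p}$ lies inside $C \cap E(G_{n,p})$ for some $C \in \mathcal{C}$, the quantity $\ex(G_{n,p}, C_{2\ell})$ is bounded by the same expression. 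Which of the two terms dominates is governed by whether $p^{1/\ell}n^{1+1/\ell}$ exceeds $n^{1+1/(2\ell-1)}$, equivalently whether $p$ exceeds $n^{-(\ell-1)/(2\ell-1)}$, recovering exactly the two cases in the statement.
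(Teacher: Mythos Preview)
This theorem is not proved in the paper; it is quoted from the literature (references [HKL, KKS, MS]), so there is no in-paper proof to compare against. That said, your outline follows the Morris--Saxton template, and since the paper imports their balanced supersaturation (Theorem~\ref{thm:morrissaxton}) and container lemma (Theorem~\ref{thm:MSContainers}) as black boxes, it is natural to assess your sketch against that method.

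There is a genuine gap in your container step. You assert a single family $\mathcal{C}$ with every $|C|\le C_0 n^{1+1/\ell}(\log n)^{O(1)}$ and $\log|\mathcal{C}|\le n^{1+1/(2\ell-1)}(\log n)^{O(1)}$, but these two bounds are incompatible with the balanced supersaturation you invoke. At density $e(G)=Kn^{1+1/\ell}$, the codegree condition in Theorem~\ref{thm:MSContainers} forces $\tau\gtrsim \max\{K^{-\ell/(\ell-1)},\,K^{-1}n^{-(\ell-1)/(\ell(2\ell-1))}\}$, so the fingerprint size $\tau\,e(G)$ is $\gtrsim \max\{K^{-1/(\ell-1)}n^{1+1/\ell},\,n^{1+1/(2\ell-1)}\}$. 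The first term dominates once $K$ drops below $n^{(\ell-1)^2/(\ell(2\ell-1))}$ and becomes $\Theta(n^{1+1/\ell})$ at $K=O(1)$. Hence iterating all the way down to container size $\Theta(n^{1+1/\ell})$ yields $\log|\mathcal{C}|=\Theta(n^{1+1/\ell})$, not $n^{1+1/(2\ell-1)}$. A symptom of this error is your conclusion: your Chernoff step would output $\ex(G_{n,p},C_{2\ell})\le O(pn^{1+1/\ell})$ in the dense regime, strictly smaller than $p^{1/\ell}n^{1+1/\ell}$; but the latter is (conditionally on the Erd\H{o}s--Simonovits girth conjecture, and unconditionally for $\ell\in\{2,3,5\}$) the correct order, so your bound is too strong to be true.

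What Morris--Saxton actually do, and what the paper mirrors in the hypergraph setting (see Theorems~\ref{thm:onestepwhenris4} and~\ref{thm:countforr>=4}), is to stop the iteration at a variable level $K$: for each $K$ one obtains a family $\mathcal{G}(n,K)$ of containers of size $\le Kn^{1+1/\ell}$ with $\log|\mathcal{G}(n,K)|\lesssim K^{-1/(\ell-1)}n^{1+1/\ell}+n^{1+1/(2\ell-1)}$ (up to polylogs). One then runs a first-moment argument, bounding the probability that $G_{n,p}$ contains a $C_{2\ell}$-free subgraph with $m$ edges by $|\mathcal{G}(n,K)|\binom{Kn^{1+1/\ell}}{m}p^m$, and \emph{optimises over $K$}: choosing $K\asymp p^{-(\ell-1)/\ell}$ balances the two exponents and gives $m=\Theta(p^{1/\ell}n^{1+1/\ell})$. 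Your single-family-plus-Chernoff shortcut bypasses this optimisation and is precisely where the argument breaks.
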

In Theorem~\ref{thm:graphcycles}, the first bound is sharp up to a polylog factor by the results of \cite{KKS}. As for the second bound, an old conjecture of Erd\H{o}s and Simonovits says that  there is a graph of girth at least $2\ell+1$ and  $\Omega(n^{1+1/\ell})$ edges. If this conjecture is true, then the second bound is also  sharp up to the value of the constant $C$ (see the discussion after Conjecture 2.3 in~\cite{MS}).

The problem of finding the largest $H$-free subgraph of $G_{n,p}^{(r)}$ is closely related to the problem of determining $|\mathrm{Forb}(n,H)|$, the number of $H$-free subgraphs on $n$ labelled vertices.   There is a well-developed theory for the latter problem for  $r$-graphs that are not $r$-partite~\cite{EFR, NRS, NR}. 
The corresponding question for $r$-partite $r$-graphs was initiated in a recent paper of the first author and Wang~\cite{MW}. The $r$-graph $C_k^{(r)}$ is obtained from the $2$-graph $k$-cycle $C_k$  by adding $r-2$ new vertices of degree one to each graph edge (thus enlarging each graph edge to an $r$-graph edge).
The authors in~\cite{MW} determined the asymptotics  of $|\mathrm{Forb}(n, C_{k}^{(r)})|$ for even $k$ and $r=3$, and conjectured that similar results hold for all $k,r\ge 3$. This was later confirmed by Balogh, Narayanan and Skokan~\cite{BNS}.   Soon after, Ferber, McKinley and Samotij~\cite{FMS} proved similar results for a  much larger class of $r$-graphs that includes linear cycles and linear paths. In the last few years  it is quite standard to use the container method to obtain the number of $H$-free graphs. However, to obtain precise bounds on  $\ex(G_{n,p}^{(r)},H)$, one needs a more delicate count on the number of $H$-free graphs with certain number of edges.  This is exactly how Morris and Saxton~\cite{MS} obtained their bounds on  $\ex(G_{n,p}, C_{2\ell})$.  These bounds usually rely on so-called balanced supersaturaion results. A  typical supersaturation result says that if an $r$-graph $G$ has more than $\ex(n,H)$  edges, then there are many copies of $H$ in $G$. An optimal supersation result would say that there are as many copies of $H$ in $G$ as one would expect in a random graph with the same edge-density. The balanced supersaturation typically requires to obtain a collection of such  copies of $H$ such that most tuples of edges  are in a bounded number  of copies of  $H$ in the collection.  Unfortunately the results of~\cite{BNS, FMS} both rely on supersaturation theorems that are not strong enough to imply  anything nontrivial for  $\ex\left(G_{n,p}^{(r)}, C_{2\ell}^{(r)}\right)$ when $\ell \geq 2$ and $p=o(1)$.  In this paper, we prove a stronger supersaturation result that does this. The bounds in our supersaturation result  depends on  the largest shadow of the underlying hypergraph and codegrees of pairs of vertices in the shadow graph. We discuss the proof method more at the beginning of Section~\ref{proofs}.

\section{Notation}

Given functions $f,g: \mathbb R^+ \to \mathbb R^+$, we write $f(n)\ll g(n)$ to mean $f(n)/g(n)\rightarrow 0$ as $n\rightarrow \infty$, $f(n) = O(g(n))$ to mean that there is an absolute positive constant $C$ such that $f(n) < C g(n)$, $f(n)=\Omega(n)$ to mean that $g(n) = O(f(n))$ and $f(n) = \Theta(g(n))$ to mean that $f(n)=O(g(n))$ and $f(n)= \Omega(n)$. 
 Throughout this paper, we say that a statement depending on $n$ holds asymptotically almost surely (abbreviated a.a.s.) if the probability that it holds tends to $1$ as $n$ tends to infinity. We will also use a notation $\plog_{\ell,r}(n)$ to denote any function which is of order $\Theta((\log{n})^{f(\ell,r)})$ where the power of the logarithm, $f(\ell,r)$ depends on $\ell$ and $r$ and is positive.  Since in this paper we are only interested in determining the  order of magnitude of $\ex(G_{n,p}^{(r)}, C_{2\ell}^{(r})$ up to polylogarithmic  factors, we use this convention to make the paper easier to read and make no attempt to optimise the logarithmic terms. So  when we write a statement holds using $\plog_{\ell,r}(n)$  it means there exists a function of $f(\ell,r)$ for which the statement is true.
 
For an $r$-graph $G$, we write $e(G)$ for its number of edges and $d(G)=r\cdot e(G)/|V(G)|$ for its average degree. If $G' \subset G$, the $r$-graph $G-G'$ has vertex set $V(G)$ and edge set $E(G)\setminus E(G')$. In particular, $e(G-G') = |E(G)\setminus E(G')|$. For an $r$-partite $r$-graph $G$ with vertex partition $(V_1, V_2, \dots, V_r)$, for $1\leq i <j \leq r$ we write $\partial_{V_i,V_j}(G)$ for the pairs $(v_i,v_j)$ with $v_i\in V_i$ and $v_j\in V_j$ such that there exists some $e\in E(G)$ with $\{v_i,v_j\}\subseteq e$. For any $r$-graph $G$,  $1\leq j\leq r$ and  any $j$-tuple $\sigma$, the \emph{degree} of $\sigma$, written $d_{G}(\sigma)$ is the number of edges that contain $\sigma$; when $\sigma=\{u,v\}$ we simplify the notation to $d_G(u,v)$. We denote by $\Delta_j(G)$ the maximum $d_G(\sigma)$ among all $j$-tuples $\sigma$.  Whenever $G$ is clear from the context we will drop it from the notation.  Given an $r$-graph $G$ and $0<\tau<1$, the \emph{co-degree  function} $\delta(G,\tau)$  is 
$$\delta(G,\tau)=\frac{1}{d(G)}\sum_{j=2}^{r}{\frac{\Delta_{j}}{\tau^{j-1}}}.$$

 A vertex subset $I$ is called \emph{independent} in $G$ if there is no edge $e\in E(G)$ such that $e\subseteq I$. For  $A\subseteq V(G)$ we define $G[A]$, \emph{the subgraph induced by} $A$, to be the $r$-graph with vertex set $A$ and edge set comprising all those  $e\in E(G)$ for which $e\subseteq A$.

\section{Main result}

 Our main result is the following extension of 
Theorem~\ref{thm:graphcycles} to linear even  cycles $C_{2\ell}^{(r)}$ for $r\geq 4$. In all of the below results all $o(1)$ error terms in the exponents are of order $O(\log\log n /\log n)$.

\begin{theorem}\label{thm:mainforratleast4}For every $\ell\geq 2$ and $r\geq 4$ a.a.s. for all $p\geq n^{-(r-2)+\frac{1}{2\ell-1}+o(1)},$
$$ \ex\left(G_{n,p}^{(r)}, C_{2\ell}^{(r)}\right) \leq 
pn^{r-1+o(1)}.$$
\end{theorem}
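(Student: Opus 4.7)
The plan is to derive Theorem~\ref{thm:mainforratleast4} by combining a sharp balanced supersaturation theorem for $C_{2\ell}^{(r)}$ with the hypergraph container method. The target bound $pn^{r-1+o(1)}$ is (up to logs) simply $p$ times the known upper bound on $\ex(n,C_{2\ell}^{(r)})$, so the role of the containers is to reduce the random problem to a deterministic counting problem on sub-extremal $r$-graphs.

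First I would prove a balanced supersaturation lemma: for every $r$-graph $H$ on $n$ vertices with $e(H)\ge n^{r-1+o(1)}$, there is a family $\mathcal{C}$ of copies of $C_{2\ell}^{(r)}$ in $H$ with $|\mathcal{C}|\ge e(H)^{2\ell}/n^{(2\ell)(r-1)-1+o(1)}$ such that, for every $2\le j\le 2\ell$, no $j$-subset of edges of $H$ lies in more than $|\mathcal{C}|\cdot n^{o(1)}/e(H)^{j-1}$ members of $\mathcal{C}$. The key structural observation is that $C_{2\ell}^{(r)}$ is the expansion of a graph cycle, so copies in $H$ correspond (with multiplicity controlled by codegrees of pairs) to copies of $C_{2\ell}$ in the $2$-shadow $\partial H$. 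By a standard regularization, one can pass to a subgraph on which vertex degrees and pair codegrees in both $H$ and $\partial H$ are tightly controlled, then invoke a Bondy–Simonovits–type counting in the shadow to produce many $C_{2\ell}$'s, and finally lift each shadow cycle to $(r-2)$-tuples on the $2\ell$ edges. The codegree control from the regularization is exactly what makes the lift balanced.

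Next I would feed this into the Saxton–Thomason container theorem (iterated $n^{o(1)}$ times): the balanced supersaturation shows that the hypergraph $\mathcal{H}$ on vertex set $\binom{[n]}{r}$ whose hyperedges are the copies of $C_{2\ell}^{(r)}$ satisfies $\delta(\mathcal{H},\tau)\le n^{-\varepsilon}$ for $\tau=n^{o(1)}/e(H)$ whenever $e(H)\gg n^{r-1+o(1)}$. The container lemma then yields a family $\mathcal{F}$ of containers such that every $C_{2\ell}^{(r)}$-free $r$-graph on $[n]$ is a subgraph of some $F\in\mathcal{F}$, each $F$ has $e(F)\le n^{r-1+o(1)}$, and $\log|\mathcal{F}|\le n^{r-1+o(1)}/\text{(a positive power of $n$)}$; in particular $\log|\mathcal{F}|\le pn^{r-1+o(1)}$ for every $p$ above the stated threshold $n^{-(r-2)+1/(2\ell-1)+o(1)}$.

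The conclusion is then a standard union bound. If some $C_{2\ell}^{(r)}$-free subgraph of $G_{n,p}^{(r)}$ has more than $pn^{r-1+o(1)}$ edges, then some container $F\in\mathcal{F}$ satisfies $|E(G_{n,p}^{(r)})\cap F|>pn^{r-1+o(1)}$, where the expectation of this intersection is at most $p\cdot e(F)\le pn^{r-1+o(1)}$. By Chernoff this occurs with probability $\exp(-\Omega(pn^{r-1+o(1)}))$, and summing over $\mathcal{F}$ yields $o(1)$ provided the threshold for $p$ is chosen so that $pn^{r-1+o(1)}$ dominates $\log|\mathcal{F}|$, which is exactly the hypothesis $p\ge n^{-(r-2)+1/(2\ell-1)+o(1)}$. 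I expect the main obstacle to be the supersaturation step: one needs the multiplicities of cycles to be sharp simultaneously at every level $j$, so a naive ``delete heavy edges'' clean-up is not enough — the inductive shadow/codegree regularization must be tight enough that the resulting $\delta(\mathcal{H},\tau)$ is polynomially small in $n$, otherwise the containers are too large and the union bound fails precisely at the threshold for $p$.
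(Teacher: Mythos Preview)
Your outline matches the paper's approach: regularise pair codegrees, work in a $2$-shadow, apply a graph-cycle supersaturation there, lift to $r$-uniform cycles, then containers and a union bound. However, two places in your sketch are genuine gaps rather than omitted details.

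First, the supersaturation you state is too strong to be true. The bound $\Delta_j(\mathcal C)\le |\mathcal C|\,n^{o(1)}/e(H)^{j-1}$ is self-contradictory at $j=2\ell$: since $\Delta_{2\ell}\ge 1$ whenever $\mathcal C\ne\emptyset$, it would force $|\mathcal C|\ge e(H)^{2\ell-1}/n^{o(1)}$, which is incompatible with your lower bound $|\mathcal C|\ge e(H)^{2\ell}/n^{2\ell(r-1)-1+o(1)}$ for any $e(H)\le n^r$. Correspondingly, $\tau=n^{o(1)}/e(H)$ would yield $\log|\mathcal F|=n^{o(1)}$, far stronger than the threshold permits. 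The paper does \emph{not} attempt a supersaturation stated purely in terms of $e(H)$: after regularising it keeps the pair codegree $\Delta_{12}$ and the shadow density $k=|\partial_{12}|/m^{1+1/\ell}$ as \emph{free parameters}, applies the Morris--Saxton \emph{balanced} supersaturation for graph $C_{2\ell}$ in the shadow (a Bondy--Simonovits count alone does not give the needed $\Delta_j$ control), lifts, and only then verifies that the co-degree function is small for $\tau=\plog_{\ell,r}(n)\,K^{-1}n^{-(r-2)+1/(2\ell-1)}$. This $\tau$ gives $\log|\mathcal F|\le n^{1+1/(2\ell-1)+o(1)}$, which is exactly what matches the hypothesis $p\ge n^{-(r-2)+1/(2\ell-1)+o(1)}$.

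Second, you do not say where $r\ge4$ enters, and this is the crux. Regularisation alone does not guarantee that any $2$-shadow is dense enough to run Morris--Saxton; one needs an additional pigeonhole argument (Claim~\ref{keyclaim} in the paper) showing that some pair $(U_1,U_j)$ satisfies $|\partial_{1j}|\ge K^{1/(r-1)}|U_1|^{2-1/(r-1)}/\plog_{\ell,r}(n)$. The exponent $2-\tfrac{1}{r-1}$ is what makes the subsequent co-degree calculation close, and it is only large enough when $r\ge4$; for $r=3$ the same method gives strictly weaker bounds, which is why the theorem excludes that case.
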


The upper bounds in Theorem~\ref{thm:mainforratleast4} together with monotonicity of the function $\ex(G_{n,p}^{(r)}, H)$ in $p$  give us the order of magnitude of $\ex\left(G_{n,p}^{(r)}, C_{2\ell}^{(r)}\right)$ up to  polylogarithmic factors.

\begin{theorem}\label{thm:ourcor}For every $\ell\geq 2$ and $r\geq 4$ a.a.s. the following holds: 
$$ \ex\left(G_{n,p}^{(r)}, C_{2\ell}^{(r)}\right) = \begin{cases} 
(1-o(1))pn^r,  & \mbox{if } {n^{-r} \ll p\ll n^{-(r-1)+\frac{1}{2\ell-1}}} \\ 
n^{1+\frac{1}{2\ell-1}+o(1)}, & \mbox{if } {n^{-(r-1)+\frac{1}{2\ell-1}+o(1)}\leq p\leq  n^{-(r-2)+\frac{1}{2\ell-1}+o(1)}} \\ 
pn^{r-1+o(1)}, & \mbox{if } { p \geq n^{-(r-2)+\frac{1}{2\ell-1}+o(1)}}. \end{cases}$$
\end{theorem}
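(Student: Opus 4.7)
The plan is to derive Theorem~\ref{thm:ourcor} as a fairly direct corollary of Theorem~\ref{thm:mainforratleast4} using monotonicity together with standard first-moment and deletion arguments. The key tool is the monotonicity of $\ex(G_{n,p}^{(r)}, H)$ in $p$: the natural coupling $G_{n,p_1}^{(r)} \subseteq G_{n,p_2}^{(r)}$ for $p_1 \leq p_2$ gives $\ex(G_{n,p_1}^{(r)}, H) \leq \ex(G_{n,p_2}^{(r)}, H)$ deterministically in the coupled space. The Case~3 upper bound is exactly Theorem~\ref{thm:mainforratleast4}. For the Case~2 upper bound, I would apply Theorem~\ref{thm:mainforratleast4} at the threshold $p^* = n^{-(r-2) + 1/(2\ell-1) + o(1)}$ to conclude that $\ex(G_{n,p^*}^{(r)}, C_{2\ell}^{(r)}) \leq p^* n^{r-1+o(1)} = n^{1 + 1/(2\ell-1) + o(1)}$ a.a.s., then transfer this bound to every $p \leq p^*$ via monotonicity. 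The Case~1 upper bound is the trivial inequality $\ex(G_{n,p}^{(r)}, C_{2\ell}^{(r)}) \leq e(G_{n,p}^{(r)}) = (1+o(1))p\binom{n}{r}$ from Chernoff concentration, valid whenever $p \gg n^{-r}$.

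For the lower bounds I would use three different constructions, one per range of $p$. In Case~1, where $p \ll n^{-(r-1)+1/(2\ell-1)}$, a first-moment computation gives that the expected number of copies of $C_{2\ell}^{(r)}$ in $G_{n,p}^{(r)}$ is $\Theta(p^{2\ell} n^{2\ell(r-1)})$, and the hypothesis on $p$ rearranges to exactly $p^{2\ell-1} n^{2\ell(r-1)-r} = o(1)$, so this expectation is $o(pn^r)$. Markov's inequality together with the concentration of $e(G_{n,p}^{(r)})$ then shows that deleting one edge from each copy leaves a $C_{2\ell}^{(r)}$-free subgraph with $(1-o(1))p\binom{n}{r}$ edges a.a.s. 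In Case~3, the edges of $G_{n,p}^{(r)}$ containing a fixed vertex form a $C_{2\ell}^{(r)}$-free subhypergraph (any linear cycle uses at least two vertices) of size $\Theta(pn^{r-1})$ a.a.s.\ by Chernoff.

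Case~2 requires a small extra step at its lower endpoint. At the scale $p^{**} = n^{-(r-1)+1/(2\ell-1)}$, both the expected number of copies of $C_{2\ell}^{(r)}$ and $\E[e(G_{n,p^{**}}^{(r)})]$ are of order $n^{1+1/(2\ell-1)}$, so naive deletion is no longer guaranteed to yield anything nontrivial. I would instead keep each edge of $G_{n,p^{**}}^{(r)}$ independently with an appropriately chosen small constant probability $q$, apply the deletion argument to the thinned graph, and balance $q\,\E[e]$ against $q^{2\ell}\,\E[\text{copies}]$ to extract $\Omega(n^{1+1/(2\ell-1)})$ surviving edges in a $C_{2\ell}^{(r)}$-free subgraph. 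Monotonicity then propagates this lower bound through the rest of Case~2. The only mildly delicate point in the whole argument is this thinning balance at the Case~1/Case~2 boundary; everything else is bookkeeping, since all of the genuine difficulty is absorbed into Theorem~\ref{thm:mainforratleast4}.
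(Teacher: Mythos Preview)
Your proposal is correct and follows essentially the same route as the paper: monotonicity plus Theorem~\ref{thm:mainforratleast4} for the upper bounds, deletion for small $p$, and the star through a fixed vertex for large $p$. The only cosmetic difference is at the Case~1/Case~2 boundary: the paper simply exploits the $n^{o(1)}$ slack in the exponents to pick $p_0$ just below the threshold (so Case~1 applies directly and gives $\ex\ge (1-o(1))p_0 n^r=n^{1+1/(2\ell-1)+o(1)}$, then monotonicity carries this up), whereas you do an explicit thinning by a constant $q$---but thinning $G_{n,p^{**}}^{(r)}$ by $q$ is the same as passing to $G_{n,qp^{**}}^{(r)}$, so your argument is just monotonicity in disguise and the extra step is unnecessary.
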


\begin{proof} When $n^{-r}\ll p \ll {n^{-(r-1)+\frac{1}{2\ell-1}}}$ a.a.s. $G_{n,p}^{(r)}$ has a $C_{2\ell}^{(r)}$-free subgraph with $(1+o(1))e(G_{n,p}^{(r)})$  edges by a simple deletion argument (see Proposition~\ref{prop:lowerbound1} for details), and this is the best possible, therefore in this regime  $\ex\left(G_{n,p}^{(r)}, C_{2\ell}^{(r)}\right) =\Theta(pn^r)$. For $p\gg n^{-(r-1)}$, $G_{n,p}^{(r)}$ a.a.s contains $\Omega(pn^{r-1})$ edges  containing a fixed vertex (see Proposition~\ref{prop:lowerbound2} for the proof).  This together with the bound in Theorem~\ref{thm:mainforratleast4} determines the optimal behaviour of   $\ex\left(G_{n,p}^{(r)}, C_{2\ell}^{(r)}\right)$ up to $n^{o(1)}$ factors for $p\geq n^{-(r-2)+\frac{1}{2\ell-1}+o(1)}$, that is, in this range  $\ex\left(G_{n,p}^{(r)}, C_{2\ell}^{(r)}\right)=  pn^{r-1+o(1)}$. To understand the behaviour of  $\ex\left(G_{n,p}^{(r)}, C_{2\ell}^{(r)}\right)$  when $n^{-(r-1)+\frac{1}{2\ell-1}+o(1)}\leq p \leq n^{-(r-2)+\frac{1}{2\ell-1}+o(1)}$   we need to combine our previous two bounds together with monotonicity of the  $H$-free property. Indeed, $\ex(G^{(r)}_{n,p}, H) \geq \ex(G^{(r)}_{n,q}, H)$ for $p \ge q$ since if $G^{(r)}_{n,q}$ has an $H$-free subgraph with $m$ edges, then $G^{(r)}_{n,p}$ has an $H$-free subgraph with $m$ edges. Therefore, if we let $p_0=n^{-(r-1)+\frac{1}{2\ell-1}+o(1)}$ then by  our earlier discussion we know that 
$$\ex\left(G_{n,p_0}^{(r)}, C_{2\ell}^{(r)}\right) = (1-o(1))p_0n^{r}= n^{1+\frac{1}{2\ell-1}+o(1)}.$$
If we let $p_1= n^{-(r-2)+\frac{1}{2\ell-1}+o(1)}$, then $\ex\left(G_{n,p_1}^{(r)},C_{2\ell}^{(r)}\right)\leq n^{1+\frac{1}{2\ell-1}+o(1)}$ by the bound in Theorem~\ref{thm:mainforratleast4}. Thus by monotonicity, $\ex\left(G_{n,p}^{(r)}, C_{2\ell}^{(r)}\right)= n^{1+\frac{1}{2\ell-1}+o(1)}$ for $  p\in[ p_0, p_1]$.
\end{proof}

\begin{figure}[H]
\centering
\begin{tikzpicture}
    % Define axis lengths and starting point
    \def\xmax{10}
    \def\ymax{5}
    \def\startx{1}
    \def\starty{0}
    % Draw x-axis
    \draw[->] (\startx,\starty) -- (\startx+\xmax+1,\starty) node[right] {$p$};
    % Draw y-axis
    \draw[->] (\startx,\starty) -- (\startx,\starty+\ymax+1) node[above] {$\ex(n,G_{n,p}^{(r)})$};
    % Plot function
    \draw[blue, thick] (\startx+0.5,\starty)  -- (\startx+2,2) -- (\startx+8,2) -- (\startx+\xmax,\ymax);
    \node[above] at (2,0.7) {$pn^r$};
    \node[above] at (10,4) {$pn^{r-1}$};
    % Add labels
    \node[below left] at (\startx,\starty) {$0$};
    % Add annotations
    \node[below] at (\startx+0.5,\starty) {$n^{-r}$};
    \node[below] at (\startx+2,\starty) {$n^{-(r-1)+\frac{1}{2\ell-1}}$};
    \node[below] at (\startx+8,\starty) {$n^{-(r-2)+\frac{1}{2\ell-1}}$};
    \node[below] at (\startx+\xmax,\starty) {$1$};
    \node[right] at (\startx,\starty+2.5) {$n^{1+\frac{1}{2\ell-1}}$};
    %\node[right] at (\startx,\starty+4) {$pn^{r-1}$};
    % Add dashes for annotations
    \draw[dashed] (\startx+2,0) -- (\startx+2,2);
    \draw[dashed] (\startx+8,0) -- (\startx+8,2);
    \draw[dashed] (\startx+\xmax,0) -- (\startx+\xmax,\ymax);
    \draw[dashed] (\startx,2) -- (\startx+2,2);
    \draw[dashed] (\startx,4) -- (\startx,\starty+4);
    \draw[dashed] (\startx,\starty+2) -- (\startx+6,\starty+2);
\end{tikzpicture}
  \caption{The behaviour of  $\ex\left(G_{n,p}^{(r)}, C_{2\ell}^{(r)}\right)$ for $r\geq 4$, $\ell\geq 2$ up to polylogarithmic terms} 
  \label{fig:general}
  \end{figure}
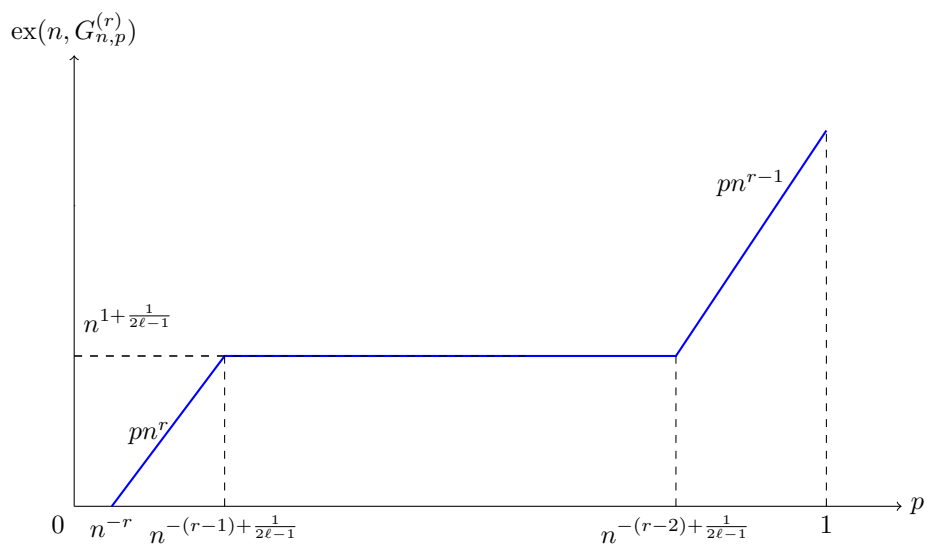

The question for odd linear cycles $C_{2\ell+1}^{(r)}$ for $\ell\geq 2$ or $\ell=1$ and $r\geq 4$ remains wide open. We believe that the behaviour for odd cycles is very different compared to our results.

\end{section}

\begin{section}{Tools}

We use standard Chernoff bounds, a version of the hypergraph container theorem and a balanced supersaturation theorem for even cycles in $2$-graphs  by Morris and Saxton. The latter is used as a black box to obtain a similar balanced supersaturation result for $r$-uniform cycles, $r\geq 3$. 

\begin{lemma}[\cite{molloy2013graph}, Chernoff bound] Given a binomially distributed variable $X\in \mathrm{Bin}(n, p)$  and  $0<a\leq 3/2$,
 $$\Prob{[|X-\E[X]|\geq a \E[X]]}\leq 2e^{-\frac{a^2}{3}\E[X]}.$$
\end{lemma}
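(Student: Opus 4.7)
The plan is to establish the two-sided tail bound by the classical Bernstein--Chernoff exponential moment method, handling the upper and lower tails separately and then combining them by a union bound, which is responsible for the factor of $2$ in the conclusion.

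First I would handle the upper tail. Fix $t>0$ and apply Markov's inequality to the random variable $e^{tX}$, giving
$$\Prob\!\left[X\geq(1+a)\E[X]\right]\;\leq\;\frac{\E[e^{tX}]}{e^{t(1+a)\E[X]}}.$$
Since $X\sim\mathrm{Bin}(n,p)$ is a sum of $n$ i.i.d.\ Bernoulli$(p)$ variables, the moment generating function factorises as $\E[e^{tX}]=(1-p+pe^{t})^{n}\leq e^{np(e^{t}-1)}=e^{\E[X](e^{t}-1)}$, using $1+x\leq e^{x}$. Optimising the resulting bound $\exp(\E[X](e^{t}-1-t(1+a)))$ over $t$ yields $t=\log(1+a)$ and the familiar inequality
$$\Prob\!\left[X\geq(1+a)\E[X]\right]\;\leq\;\exp\!\bigl(-\E[X]\,((1+a)\log(1+a)-a)\bigr).$$
The lower tail is treated symmetrically with $t<0$, giving $\Prob[X\leq(1-a)\E[X]]\leq\exp(-\E[X]\,((1-a)\log(1-a)+a))$ (where $a\in(0,1)$ suffices for this side; the upper tail is where the constraint $a\leq 3/2$ will matter).

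The remaining step is an elementary calculus comparison of the Kullback--Leibler rates with the quadratic $a^{2}/3$. I would verify that
$$(1+a)\log(1+a)-a\;\geq\;\tfrac{a^{2}}{3}\qquad\text{for }0<a\leq\tfrac{3}{2},$$
and that $(1-a)\log(1-a)+a\geq a^{2}/2\geq a^{2}/3$ for $0<a<1$, for instance by checking that the difference vanishes at $a=0$ together with its first derivative and then controlling the second derivative on the interval. Combining the two one-sided estimates by the union bound produces the stated factor of $2$ in front of $e^{-a^{2}\E[X]/3}$.

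The only real obstacle is the verification of the quadratic lower bound on the rate function on the full interval $0<a\leq 3/2$: the sharp threshold beyond which $(1+a)\log(1+a)-a\geq a^{2}/3$ fails is exactly what dictates the hypothesis $a\leq 3/2$ in the statement, so this calculus step must be done carefully rather than dismissed. Everything else is purely mechanical manipulation of the binomial moment generating function.
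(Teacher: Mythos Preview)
The paper does not prove this lemma; it is simply quoted as a standard tool with a citation to Molloy--Reed, so there is nothing to compare against. Your argument is the standard and correct one: the exponential moment method gives the two one-sided bounds with rate functions $(1+a)\log(1+a)-a$ and $(1-a)\log(1-a)+a$, and the elementary inequalities $(1+a)\log(1+a)-a\geq a^{2}/3$ on $(0,3/2]$ and $(1-a)\log(1-a)+a\geq a^{2}/2$ on $(0,1)$ reduce both to $e^{-a^{2}\E[X]/3}$, after which the union bound supplies the factor $2$. One small remark: the inequality $(1+a)\log(1+a)-a\geq a^{2}/3$ actually persists a little beyond $a=3/2$ (up to roughly $a\approx 1.81$), so $3/2$ is not the \emph{exact} threshold but rather a convenient clean value below it; this does not affect the validity of your proof.
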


\begin{theorem}[\cite{MS}, Theorem 4.2] \label{thm:MSContainers}For each $r\geq 2$ there exist   $\eps_0=\eps_0(r)$ such that for all $0<\eps <\eps_0$ the following holds. Let $G$ be an $r$-graph with $N$ vertices  and suppose  $\delta(G, \tau)\leq \eps $, for some $0<\tau < 1/2$. Then there exists a collection of $\mathcal{C}\subset  \mathcal{P}(V(G))$ of at most $$\exp\left(\frac{\tau N\log{1/\tau}}{\eps}\right)$$  subsets of $V(G)$ (called the containers of $G$) such that
\begin{itemize}
\item [(1)] for each independent set $I\subset V(G)$ there exists a container $C\in \mathcal{C}$ such that  $I\subseteq C$,
\item [(2)] $e(G[C])\leq (1-\eps) e(G)$, for each container $C\in \mathcal{C}$.
\end{itemize}
\end{theorem}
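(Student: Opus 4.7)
The plan is to prove this by a container algorithm that encodes each independent set $I \subseteq V(G)$ through a small \emph{fingerprint} $S = S(I) \subseteq I$ of size at most $\tau N$, together with a \emph{container} $C = C(S) \supseteq I$ that depends only on $S$. Since there are at most $\sum_{i \le \tau N} \binom{N}{i} \le \exp\!\bigl(O(\tau N \log(1/\tau))\bigr)$ possible fingerprints, this immediately yields the required bound on the number of containers, with the factor $1/\varepsilon$ absorbed into the implicit constant of the exponent.

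The algorithm I have in mind is the standard Saxton--Thomason / Balogh--Morris--Samotij scoring procedure. Fix a priority order on $V(G)$ that refines the degree ordering so as to also account for the higher codegrees $\Delta_j$. Maintain an available set $A$ (initially $V(G)$) and a fingerprint $S$ (initially empty). Process vertices in the priority order: when the current highest-priority vertex $v \in A$ happens to lie in $I$, move it into $S$ and delete from $A$ every vertex $u$ for which there exist $T \subseteq S$ with $|T| \le r-1$ and an edge $e \in E(G)$ with $\{u\}\cup T \subseteq e$ (such $u$ cannot lie in $I$, since $I$ is independent). Otherwise simply delete $v$ from $A$. Halt as soon as $e(G[A]) \le (1-\varepsilon)e(G)$ and output $C(S) = A \cup S$. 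Every decision after $S$ is fixed depends only on $S$ and $G$, so $C(S)$ is a function of $S$ alone and $I \subseteq C(S)$ by construction.

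The core quantitative step is a potential-function argument bounding $|S|$ by $\tau N$. Each addition of a vertex to $S$ destroys edges according to how $S$ interacts with the $\Delta_j$ statistics: a newly chosen fingerprint vertex kills at most $\Delta_2$ further vertices through single links, at most $\Delta_3/\tau$ vertices through pairs already in $S$, and so on up to $\Delta_r/\tau^{r-2}$ through $(r-1)$-tuples. Amortising these contributions yields a cost of roughly $d(G)\cdot \delta(G,\tau)$ edges killed per fingerprint addition; summing over $\tau N$ additions and invoking $\delta(G,\tau) \le \varepsilon$ forces at least an $\varepsilon$-fraction of $e(G)$ to be destroyed by the time $|S|$ reaches $\tau N$, which triggers the halting condition.

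The main obstacle will be making this amortisation rigorous so that $\Delta_2, \Delta_3, \ldots, \Delta_r$ really do combine into the single scalar $\delta(G,\tau)$. This requires a data-dependent priority rule --- high-codegree tuples must be processed early, before they can inflate later deletion sets --- and a canonical tiebreaking convention, so that two distinct independent sets sharing the same fingerprint $S$ genuinely land in the same container $C(S)$. Once this bookkeeping is set up consistently, reading off the pair $(S, C(S))$ for each independent set and counting fingerprints finishes the proof; but the careful design of the priority function and the deletion rule is the one place where nontrivial combinatorial input is required.
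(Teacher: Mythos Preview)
The paper does not prove this statement at all: Theorem~\ref{thm:MSContainers} is quoted verbatim from Morris--Saxton~\cite{MS} in the ``Tools'' section and used as a black box. So there is no proof in the paper to compare your attempt against.

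That said, your sketch has a genuine error in the deletion rule that would make the argument break down. You write that when a new fingerprint vertex $v$ is chosen, you delete every $u \in A$ for which some $T \subseteq S$ with $|T| \le r-1$ and some edge $e \in E(G)$ satisfy $\{u\} \cup T \subseteq e$, on the grounds that ``such $u$ cannot lie in $I$, since $I$ is independent''. This implication is false: if $|T| < r-1$ then $\{u\} \cup T$ is a proper subset of $e$, and the remaining vertices of $e$ need not lie in $I$, so $u$ may perfectly well belong to $I$. Deleting such $u$ would destroy the invariant $I \subseteq A \cup S$, and the output would no longer be a container for $I$. The only vertices you may safely remove from $A$ for independence reasons are those $u$ completing a full edge inside $S \cup \{u\}$, i.e.\ with $|T| = r-1$ and $e = T \cup \{u\}$.

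The actual container algorithms (in either the Saxton--Thomason or Balogh--Morris--Samotij form) do not work by deleting vertices outright in this way. They instead track, for each surviving vertex $u$ and each $1 \le j \le r-1$, the number of $j$-subsets of $S$ that extend with $u$ to a $(j+1)$-subset of some edge, and they \emph{mark} (rather than delete) $u$ once one of these counts exceeds a threshold tied to $\Delta_{j+1}/\tau^{j}$. The container is then the set of unmarked vertices together with $S$; vertices of $I$ can be marked, but one shows separately that not too many are, and this is where the codegree function $\delta(G,\tau)$ enters. Your amortisation paragraph gestures at the right quantities, but without the correct marking/threshold mechanism the bound $|S| \le \tau N$ cannot be derived from the halting condition as you describe it.
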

Below we consider the collection of copies of $H$ in $G$. This can be viewed as an $e(H)$-graph with vertex set $E(G)$, and   edge set comprising collection of edges of $G$ that form a copy of $H$. 
\begin{theorem}[\cite{MS}, Theorem 1.5]\label{thm:morrissaxton}For every $\ell\geq 2$, there exist  $Q >0$, $\delta >0$ and $k_0\in \mathbb{N}$ such that   the following holds for every $k\geq k_0$ and every $n\in \mathbb{N}$. Given a graph $G$ with $n$ vertices and $kn^{1+1/\ell}$ edges there exists a collection $\mathcal{F}$  of copies of $C_{2\ell}$ in $G$ satisfying 

\begin{itemize}
\item[(a)]  $|\mathcal{F}|\geq \delta k^{2\ell}n^2,$
\item[(b)]  $d_{\mathcal{F}}(\sigma)\leq Q  k^{2\ell-j-\frac{j-1}{\ell-1}}n^{1-\frac{1}{\ell}}$ for every $j$-tuple $\sigma$ for $1\leq j\leq 2\ell-1$. 
\end{itemize}
\end{theorem}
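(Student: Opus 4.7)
The plan is a two-stage argument: first produce a large but unbalanced collection of copies of $C_{2\ell}$, then prune it to enforce the codegree bounds in (b).

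\emph{Supersaturation.} Set $d = 2kn^{1/\ell}$, the average degree of $G$. Iteratively discarding vertices of degree less than $d/4$ produces a subgraph $G' \subseteq G$ of minimum degree at least $d/4$ that retains at least half the edges of $G$. For every $u \in V(G')$ there are at least $(d/4)^{\ell}$ walks of length $\ell$ starting at $u$, so if $W(u,v)$ denotes the number of walks of length $\ell$ in $G'$ from $u$ to $v$ then Cauchy--Schwarz gives
\[ \sum_{u,v} W(u,v)^2 \;\geq\; \frac{1}{n^2}\Bigl(\sum_{u,v} W(u,v)\Bigr)^2 \;\geq\; \Bigl(\frac{d}{4}\Bigr)^{2\ell} \;=\; \Theta(k^{2\ell} n^2). \]
Each ordered pair of distinct walks sharing both endpoints yields a closed walk of length $2\ell$. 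A standard calculation (using $k \geq k_0$) shows that closed walks repeating a vertex or edge contribute only a lower-order correction, so this produces an initial family $\mathcal{F}_0$ of $\Omega(k^{2\ell}n^2)$ copies of $C_{2\ell}$, already verifying (a) for a suitable $\delta$.

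\emph{Balancing.} For each $1 \leq j \leq 2\ell - 1$ and each $j$-tuple $\sigma$ of edges of $G$, call $\sigma$ \emph{overloaded} if $d_{\mathcal{F}_0}(\sigma)$ exceeds the target in (b). Delete from $\mathcal{F}_0$ every cycle that contains some overloaded $\sigma$ to obtain $\mathcal{F}$; then (b) holds automatically, and it remains to ensure $|\mathcal{F}| \geq \tfrac{1}{2}|\mathcal{F}_0|$. The key input is a structural extension lemma: if $\sigma$ embeds in $C_{2\ell}$ as a union of vertex-disjoint paths with $s$ components and total length $j$, then the number of copies of $C_{2\ell}$ in $G'$ containing $\sigma$ is bounded by the product, over the $s$ gaps of $C_{2\ell} \setminus \sigma$, of walk counts of the appropriate length between the prescribed endpoints in $G'$. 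Applying a H\"older-type inequality to these walk counts and plugging in the min-degree bound in $G'$ matches, on average over $\sigma$, the exponent in (b). Summing the resulting excess across all $\sigma$ and all $j$ and choosing $Q$ large relative to the constants of the extension lemma ensures that at most half of $\mathcal{F}_0$ is deleted.

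\emph{Main obstacle.} The technical heart of the argument is the extension lemma delivering the exponent $(j-1)/(\ell-1)$ precisely. This exponent arises from a tight optimisation over the possible shapes of $\sigma$ inside $C_{2\ell}$: the trade-off between the number of components $s$, the lengths of the $s$ gaps, and how walk-count mass concentrates across gaps. A naive H\"older application loses too much for intermediate $j$, while extreme regimes (e.g.\ $j = 2\ell-1$, which forces $s = 1$ and the trivial bound $d_{\mathcal{F}_0}(\sigma) \leq 1$) must be handled separately. The cleanest route I would try is a downward induction on $j$, peeling a single edge at a time off $\sigma$ and combining the inductive bound with eigenvalue-style estimates on walk counts in the nearly regular graph $G'$; an alternative is to prove the extension lemma \emph{in expectation} along a carefully chosen random ordering of the gaps, so that the H\"older loss is amortised across the $s$ factors.
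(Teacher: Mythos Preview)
The paper does not prove this theorem at all: it is quoted verbatim from Morris--Saxton \cite{MS} and used as a black box (see the Tools section, where the authors explicitly say the balanced supersaturation theorem for $C_{2\ell}$ in $2$-graphs ``is used as a black box''). So there is no ``paper's own proof'' to compare against; I can only comment on whether your sketch is a plausible route to the Morris--Saxton result.

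Your two-stage plan (unbalanced supersaturation, then prune) is natural, but the pruning step as you describe it does not close. You assert that the extension lemma gives the right exponent \emph{on average over $\sigma$}, and that choosing $Q$ large then keeps the deleted fraction below one half. Averaging alone cannot deliver this: the identity $\sum_{\sigma} d_{\mathcal{F}_0}(\sigma)=\binom{2\ell}{j}|\mathcal{F}_0|$ says nothing about how much of that mass sits above the threshold $T_j$, and in a graph with a small very dense spot essentially all cycles in $\mathcal{F}_0$ can pass through a handful of edges, so every $j$-tuple containing one of those edges is overloaded and pruning wipes out $\mathcal{F}_0$ entirely. Your proposed ``extension lemma'' would need to be a \emph{pointwise} upper bound on $d_{\mathcal{F}_0}(\sigma)$, but the ingredients you list (min-degree of $G'$, H\"older on walk counts) give lower bounds on total walk counts, not upper bounds on walk counts between prescribed endpoints, and no such pointwise bound holds in a general high-min-degree graph.

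This is exactly why Morris and Saxton do \emph{not} produce-then-prune. Their proof builds $\mathcal{F}$ incrementally: they grow paths edge by edge inside a carefully preprocessed subgraph, maintaining the codegree constraints (b) throughout via an intricate bookkeeping argument, and the exponent $(j-1)/(\ell-1)$ falls out of that construction rather than from a post-hoc averaging. Your ``Main obstacle'' paragraph correctly identifies that the exponent is the crux, but the two attacks you propose (downward induction peeling one edge, or amortised H\"older over a random ordering of gaps) still live in the produce-then-prune framework and would need a pointwise extension bound that is simply false in general. If you want to make this approach work you would first have to pass to a subgraph with much more structure than bounded min-degree (something close to regularity of all walk counts), and at that point you are essentially redoing the Morris--Saxton preprocessing.
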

\end{section}
\begin{section}{Proof}\label{proofs}
The main novelty of this paper is in proving a new ``balanced supersaturation" result for linear even $r$-uniform cycles. Such a result   states roughly that an $r$-graph $G$ on $n$ vertices with significantly more than
$\ex(n, C^{(r)}_{2\ell})$  edges contains many copies of  $C_{2\ell}^{(r)}$ which are additionally distributed relatively uniformly over the edges of $G$. This was already proved in~\cite{BNS,FMS} but it was not strong enough to establish a corresponding result in random graphs. The output of our supersaturation result depends crucially on the codegrees  of the underlying $r$-graph in addition to the density. We first pass to a large $r$-partite subgraph of our $r$-graph. Then using a procedure that we call regularization, we make sure that for each pair of partition classes, $V_i,V_j$ all codegrees are within a constant factor from each other (the codegree is called $\Delta_{ij}$ in the proof). We then show that there must exist a pair which has  a ``large'' $2$-shadow, as otherwise the original $r$-graph will not have $Kn^{r-1}$ edges (see Claim~\ref{keyclaim}). Note that for $r=3$, this bound is not large enough for the rest of the arguments to provide optimal bounds, but if one can improve this, then better bounds for $r=3$ will immediately follow. Then we use a corresponding balanced supersaturation result of graphs from~\cite{MS} to obtain a collection of graph cycles in this $2$-shadow graph (without loss of generality, it is assumed the largest $2$-shadow is $\partial_{12}$, the one coming from the pair $V_1, V_2$) which are distributed relatively uniformly over the edges of the $2$-shadow graph. Now because of the regularization, we are able to obtain the desired collection of hypergraph cycles in the original hypergraph by extending the graph cycles in the $2$-shadow graph to hypergraph cycles in a natural way.

It is crucial for our method to keep the codegree  dependence along the way.  We do not have any non-trivial upper or lower bounds on $\Delta_{12}$, the codegree of the pairs of vertices in the shadow graph $\partial_{12}$, in particular, it can be as large as $n$. However, by keeping $\Delta_{12}$  as a parameter in the balanced supersaturation statement and using the fact that the overall edge density in the $r$-graph is $K n^{r-1}$, we are able to quantify the tradeoff between the codegree $\Delta_{12}$ and the size of the largest $2$-shadow, $\partial_{12}$.

In summary, by keeping track of the codegree in the original $r$-graph while working in the $2$-shadow graph we obtain non-trivial results for  $\ex\left(G_{n,p}^{(r)}, C_{2\ell}^{(r)}\right)$;   previous balanced supersaturation results~\cite{BNS, FMS} which exclusively worked in the $2$-shadow graph gave no non-trivial results unless $p$ is a constant. Due to this dependence, our supersaturation result appears as part of the proof of the following theorem (see (P5) and (P6)). Indeed, it would be repetitive to separate it from the rest of the proof as we need to keep the parameter $\Delta_{12}$ in the balanced supersaturation result.

\begin{theorem}\label{thm:onestep}For every  $r\geq 4$, $\ell\geq 2$, there exist $K_0,n_0\in \mathbb{N}$ and $\eps >0$ such that the following holds for all  $n\geq n_0$ and every $K\geq K_0(\log{n})^{2r(r-1)}$. Given an $r$-graph $G$ with $n$ vertices and $Kn^{r-1}$  edges there exists a collection  $\mathcal{C}$ of at most 
 \begin{equation} \label{eqn:numcon}\exp \left(\frac{\plog_{\ell,r}(n)}{\eps} n^{1+\frac{1}{2\ell-1}} \right)\end{equation}
subgraphs of $G$ such that 

\begin{itemize} \item [(i)] every $C_{2\ell}^{(r)}$-free subgraph of $G$ is a subgraph of some $C\in \mathcal{C}$, 
 \item [(ii)]  $e(C)\leq  \left(1-\frac{\eps}{(\log{n})^{r^2(\ell+1) }}\right)e(G)$, for each $C\in \mathcal{C}$.
\end{itemize}
\end{theorem}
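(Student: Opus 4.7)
The plan is to apply the hypergraph container theorem (Theorem~\ref{thm:MSContainers}) to an auxiliary $2\ell$-uniform hypergraph $\mathcal{H}$ whose vertex set is $E(G)$ and whose hyperedges are the copies of $C_{2\ell}^{(r)}$ in $G$, so that $C_{2\ell}^{(r)}$-free subgraphs of $G$ correspond to independent sets of $\mathcal{H}$. The heart of the argument is a balanced supersaturation statement: building a large, codegree-controlled collection $\mathcal{F}$ of copies of $C_{2\ell}^{(r)}$ in $G$ so that $\delta(\mathcal{H},\tau)\le\varepsilon/(\log n)^{r^2(\ell+1)}$ holds for some $\tau$ with $\tau\cdot e(G)\le \plog_{\ell,r}(n)\cdot\varepsilon^{-1}\cdot n^{1+1/(2\ell-1)}$; this is exactly what Theorem~\ref{thm:MSContainers} requires in order to produce the container bound~\eqref{eqn:numcon}.

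First I would pass to a balanced $r$-partite subgraph $G_0\subseteq G$ with partition $V_1\sqcup\dots\sqcup V_r$ and $\Omega(Kn^{r-1})$ edges, then \emph{regularize codegrees}: for each pair $1\le i<j\le r$ I would bucket the codegrees $d_{G_0}(u,v)$ with $(u,v)\in V_i\times V_j$ dyadically and iteratively restrict to the densest bucket. At a cost of a $(\log n)^{O(r^2)}$ factor I obtain $G_1\subseteq G_0$ with $e(G_1)\ge Kn^{r-1}/\plog_{\ell,r}(n)$ in which every $(u,v)\in\partial_{V_i,V_j}(G_1)$ satisfies $d_{G_1}(u,v)\in[\Delta_{ij}/2,\Delta_{ij}]$ for some value $\Delta_{ij}$. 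Double counting gives $\binom r2\, e(G_1)\asymp\sum_{i<j}|\partial_{V_i,V_j}(G_1)|\Delta_{ij}$, and pigeonhole isolates a pair, WLOG $(V_1,V_2)$, with $|\partial|\,\Delta\gtrsim Kn^{r-1}/\plog_{\ell,r}(n)$ where $\partial:=\partial_{V_1,V_2}(G_1)$ and $\Delta:=\Delta_{12}$; this is the key claim, where the hypothesis $r\ge 4$ enters. Setting $k:=|\partial|/n^{1+1/\ell}$, I would apply Theorem~\ref{thm:morrissaxton} to $\partial$ to obtain a family $\mathcal{F}_{\mathrm{gr}}$ of copies of $C_{2\ell}$ with $|\mathcal{F}_{\mathrm{gr}}|\ge\delta k^{2\ell}n^2$ and $d_{\mathcal{F}_{\mathrm{gr}}}(\sigma)\le Q k^{2\ell-j-(j-1)/(\ell-1)}n^{1-1/\ell}$, and lift each graph cycle to $\Omega(\Delta^{2\ell})$ linear copies of $C_{2\ell}^{(r)}$ in $G_1$ (the rare extensions with colliding auxiliary vertices being discarded, which is valid in the regime $\Delta\ll n^{r-2}$).

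For the container step, the projection of any hyperedge-tuple of $\mathcal{H}$ onto $\partial$ yields $\Delta_j(\mathcal{H})\le Q\,\Delta^{2\ell-j}\,k^{2\ell-j-(j-1)/(\ell-1)}\,n^{1-1/\ell}$, and together with the shadow--codegree identity $\Delta k\asymp Kn^{r-2-1/\ell}/\plog_{\ell,r}(n)$ a direct computation gives
\[
\frac{\Delta_j(\mathcal{H})}{d(\mathcal{H})\,\tau^{j-1}}\;\le\;\frac{\plog_{\ell,r}(n)}{\bigl(\tau\,\Delta\,k^{\ell/(\ell-1)}\bigr)^{j-1}}\qquad\text{for every }2\le j\le 2\ell.
\]
Choosing $\tau$ at the threshold $\tau\,\Delta\,k^{\ell/(\ell-1)}=(\log n)^{r^2(\ell+1)+O(1)}/\varepsilon$ forces $\delta(\mathcal{H},\tau)\le\varepsilon/(\log n)^{r^2(\ell+1)}$, and a second use of $\Delta k\asymp Kn^{r-2-1/\ell}/\plog_{\ell,r}(n)$ converts this into $\tau\cdot e(G_1)\le\plog_{\ell,r}(n)\cdot\varepsilon^{-1}\cdot n^{1+1/\ell}/k^{1/(\ell-1)}$. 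Theorem~\ref{thm:MSContainers} then applies with this $\tau$ and $\epsilon_{\mathrm{cont}}:=\varepsilon/(\log n)^{r^2(\ell+1)}$; extending each $\mathcal{H}$-container by the set $E(G)\setminus E(G_1)$ and taking a union over the $(\log n)^{O(r^2)}$ choices of dyadic buckets gives the desired family of containers for $G$ satisfying (i) and (ii).

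The main obstacle is closing this final arithmetic: the required bound $\tau\cdot e(G_1)\le\plog_{\ell,r}(n)\,\varepsilon^{-1}\,n^{1+1/(2\ell-1)}$ amounts to $k\ge n^{(\ell-1)^2/(\ell(2\ell-1))}/\plog_{\ell,r}(n)$, and verifying this from the shadow--codegree tradeoff is precisely where the hypothesis $r\ge 4$ is essential. For $r\ge 4$ the combinatorial room provided by $\binom r2\ge 6$ pair-classes per edge, together with the trivial bound $\Delta\le n^{r-2}$, pushes $|\partial|$ above the required threshold $n^{(3\ell-1)/(2\ell-1)}/\plog_{\ell,r}(n)$; for $r=3$ the analogous tradeoff is too weak, as acknowledged in the introduction. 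A secondary subtlety is that the regularization must be performed jointly over all $\binom r2$ pairs rather than only the eventually chosen one, so that when the pigeonhole is invoked every pair already admits a clean codegree parameter $\Delta_{ij}$; this is what allows the lift to contribute a predictable $\Theta(\Delta^{2\ell})$ hypergraph cycles per graph cycle and to preserve the codegree bounds on $\mathcal{H}$ uniformly in $j$.
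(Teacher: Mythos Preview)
Your overall architecture matches the paper's: regularize codegrees across all $\binom r2$ pairs, pick a pair with a dense shadow, apply Theorem~\ref{thm:morrissaxton} to that shadow, lift to hypergraph cycles, and feed the resulting $2\ell$-graph into Theorem~\ref{thm:MSContainers}. The difficulty you correctly isolate --- showing that the chosen shadow satisfies $|\partial|\ge n^{(3\ell-1)/(2\ell-1)}/\plog_{\ell,r}(n)$, equivalently $k\ge n^{(\ell-1)^2/(\ell(2\ell-1))}/\plog_{\ell,r}(n)$ --- is exactly the content of the paper's Claim~\ref{keyclaim}. But your argument for it does not work.

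You write that pigeonhole over the $\binom r2$ pairs gives one with $|\partial|\,\Delta\gtrsim Kn^{r-1}/\plog_{\ell,r}(n)$, and that combining this with $\Delta\le n^{r-2}$ and the ``combinatorial room'' from $\binom r2\ge 6$ forces $|\partial|$ above the threshold. Neither step does anything. The product bound $|\partial_{ij}|\,\Delta_{ij}\gtrsim e(G_1)$ holds for \emph{every} pair $(i,j)$ directly from $e(G_1)\le 2\Delta_{ij}\,|\partial_{ij}|$, so no pigeonhole is needed and the number of pairs is irrelevant. And plugging $\Delta\le n^{r-2}$ into $|\partial|\,\Delta\gtrsim Kn^{r-1}$ yields only $|\partial|\gtrsim Kn$, which for $K$ of polylogarithmic size is far below $n^{(3\ell-1)/(2\ell-1)}$. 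The paper's actual argument is a bootstrap: assuming $|\partial_{12}|$ is \emph{below} the threshold, the product identity forces $\Delta_{12}$ to be large; then for a fixed pair $(u_1,u_2)$ in an edge, the $\gtrsim\Delta_{12}$ edges through it spread across $U_3,\dots,U_r$, and pigeonhole on coordinates $3,\dots,r$ produces an index $j$ with at least $\Delta_{12}^{1/(r-2)}$ distinct neighbours of $u_1$ in $U_j$. Summing over $u_1\in U_1$ gives $|\partial_{1j}|\gtrsim K^{1/(r-1)}|U_1|^{2-1/(r-1)}/\plog_{\ell,r}(n)$, and it is \emph{this} exponent $2-\tfrac{1}{r-1}$ that meets the required $\tfrac{3\ell-1}{2\ell-1}$ precisely when $r\ge 4$. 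Your sketch has no mechanism producing an exponent of this shape.

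Two smaller points. First, your condition ``valid in the regime $\Delta\ll n^{r-2}$'' for the lifting step is backwards: what is needed to discard colliding extensions is a \emph{lower} bound $\Delta\gg n^{r-3}$ (the paper's Claim~\ref{claim:degrees}), not an upper bound. Second, the paper does not apply Theorem~\ref{thm:MSContainers} with a vanishing parameter $\eps_{\mathrm{cont}}=\eps/(\log n)^{r^2(\ell+1)}$ as you propose; it applies it with a fixed constant $\delta$, obtains $e(S[C])\le(1-\delta)e(S)$, and then converts this into the bound on $e(G)-e(C)$ via the inequality $e(S)/\Delta_1(S)\ge \delta^2|\partial|\Delta/\plog_{\ell,r}(n)$ together with $|\partial|\Delta\gtrsim Kn^{r-1}/\plog_{\ell,r}(n)$. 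Your route through a shrinking $\eps_{\mathrm{cont}}$ would also need the $j=2\ell$ codegree case handled separately (since $\Delta_{2\ell}\le 1$ does not fit the pattern $\Delta_j\lesssim \Delta^{2\ell-j}k^{2\ell-j-(j-1)/(\ell-1)}n^{1-1/\ell}$), which you omit.
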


\begin{proof} The proof has two parts. In the first part we find a large collection of $2\ell$-cycles in $G$ such that the degrees of $j$-tuples in this collection, for all $1\leq j\leq 2\ell-1$, are small enough so that in the second part we are able to apply Theorem~\ref{thm:MSContainers}. Thus, we obtain  containers which contain all $C_{2\ell}$-free subgraphs of $G$. Finally, we show that the containers satisfy (i) and (ii). 

For the first part we pass  to a large $r$-partite subgraph $H'$ of $G$ with  vertex partition $(U_1,U_2, \dots U_r)$ in which all the pairs of vertices lying in different partition classes that are in some edge together, are ``almost regular''. That is, for every $1\leq i < j\leq r $ there exists some number $\Delta_{ij}$ such that if we pick $u_i\in U_i,u_j\in U_j$  such that $u_i$ and $u_j$ are in some edge together then $\Delta_{i,j}\leq d_{H'}(u_i,u_j)\leq 2\Delta_{i,j}$ (up to polylog factors). Moreover, we can guarantee that no vertices in $H'$ are isolated.  Then we find a pair of partition classes, say $U_1,U_2$, with respect to which the shadow is large, that is, $\partial_{U_1U_2}(H')$ is a dense enough  $2$-graph to apply Theorem~\ref{thm:morrissaxton} and obtain a large collection of $2\ell$-cycles in $\partial_{U_1U_2}(H')$. Then we  expand these cycles  using the regularity of pairs of vertices in $H'$ to $2\ell$-cycles in $H'$, and thus in $G$. The  second part of the proof is more technical.  We show that for all $1\leq j\leq 2\ell-1$, the $j$-tuples behave well enough so that the assumptions of  Theorem~\ref{thm:MSContainers} are satisfied.
\bigskip

{\bf Part 1. Balanced supersaturation}

Let  $Q, \delta_0, k_0$ be obtained from Theorem~\ref{thm:morrissaxton} applied with $\ell$. Let $\eps_0$ be obtained from Theorem~\ref{thm:MSContainers} applied with $r_{\ref{thm:MSContainers}}=2\ell$. Choose constants

$$R={r\choose 2}\qquad  \alpha_r = \frac{r!}{2r^{R+r}} \qquad  \beta_r = \frac{\alpha_r}{4Rr^R} \qquad K_0=8 \,k_0^2 \, \beta_r^{-2} \qquad \delta\ll \min\{\eps_0, \, \delta_0\}\qquad \eps=\delta^4.$$ 

By a classical result of Erd\H{o}s-Kleitman~\cite{EK}, $G$ has an $r$-partite subgraph $H$ with 
$r$-partition $(V_1,V_2,\dots, V_r)$
such that $e(H)\geq r!e(G)/r^r$. Let $$D=\{\textbf{s}= \{s_{i,j}\}_{1\leq i<j\leq r } \,: s_{i,j} \in \{1, 2, \ldots,  \lfloor (r-2)\log{n} \rfloor\}\}.$$ 
For each $\textbf{s}=\{s_{i,j}\}_{1\leq i<j\leq r }\in D$, let
$$E(\textbf{s}) = \{ \{v_1,v_2,\dots, v_r\}\in E(H): v_i\in V_i \hbox{ and }
2^{s_{i,j}}\leq d_H(v_i,v_{j})<2^{s_{i,j}+1} \hbox{ for all $i,j$}\}.$$ Since $\cup_{\textbf{s}}{E(\textbf{s})}$ is a partition of  $E(H)$,   by the pigeonhole principle there exists  $\textbf{s}_0 \in D$ such that $$|E(\textbf{s}_0)|\geq \frac{e(H)}{|D|} > \frac{e(H)}{(r\log{n})^R}.$$  For every $1\leq i <j\leq r$ and $s_{i,j}\in \textbf{s}_0$, let  $\Delta_{ij}=2^{s_{i,j}}$. Let $H_0$ be the 
subgraph with   $$
 V(H_0) =\bigcup_{e \in E(\textbf{s}_0)} \, e \qquad \hbox{and}\qquad
 E(H_0) = E(\textbf{s}_0).$$ Let $U_i=V_i \cap V(H_0)$. By definition, 
\begin{equation}\label{eq:edgesofH}e(H_0)\geq \frac{e(H)}{(r\log{n})^R} \geq \frac{r!e(G)}{r^{R+r}(\log{n})^R}\geq  \frac{r!Kn^{r-1}}{r^{R+r}(\log{n})^R}.\end{equation}

Now construct a sequence of $r$-graphs $H_0\supset H_1\supset \cdots \supset H_m=H'$ as follows. For each $q\geq 0$ and each pair $v_i\in U_i,v_j\in U_j$ if $$d_{H_q}(v_i,v_j)< \frac{d_{H}(v_i,v_j)}{2R(r\log{n})^R},$$ 
then delete all edges containing the pair $v_i,v_j$ from $H_q$ and let the resulting $r$-graph be $H_{q+1}$.  By (\ref{eq:edgesofH}), the number of edges deleted during this process is at most 
$$\sum_{1\leq i<j\leq r}\sum_{v_i\in V_i, v_j\in V_j}{\frac{d_H(v_i,v_j)}{2R(r\log{n})^R}} \leq \frac{e(H)}{2(r\log{n})^R} \leq \frac{e(H_0)}{2}.$$
Consequently, the process terminates with $H'$ and $e(H') \ge e(H_0)/2$.  Again using (\ref{eq:edgesofH}) we obtain
\begin{equation}\label{eq:edgesofHprime}
e(H')\geq  \frac{e(H_0)}{2} \ge   \frac{r!Kn^{r-1}}{2r^{R+r}(\log{n})^R} =  \frac{\alpha_r  Kn^{r-1}}{(\log{n})^R}.
\end{equation}
We also delete all the vertices which became isolated in $U_i$ for all $1\leq i \leq r$. 
For simplicity of presentation,  we keep using the letters $U_i$ for the parts of $H'$.  Let $\partial_{i,j}=\partial_{U_i,U_j}(H')$. Now $H'$ satisfies the following, for all $1\leq i<j\leq r$:
\begin{itemize}
\item  [(P1)] there are no isolated vertices in $H'$
\item  [(P2)] if  $v_i\in U_i, v_j\in U_j$  such that there exists $e\in E(H')$ with $\{v_iv_j\}\subseteq e$  then  $$\frac{\Delta_{ij}}{2R(r\log{n})^R}\leq d_{H'}(v_i,v_j) \leq 2\Delta_{ij}.$$
\end{itemize}

Without loss of generality, we may assume that $|U_1|\geq |U_2|\geq \dots |U_r|$.

\begin{claim} \label{keyclaim} There exists some $j\in\{2,\dots, r\}$ such that $$|\partial_{1j}| \geq \frac{K^{\frac{1}{r-1}} |U_1|^{2-\frac{1}{r-1}}}{\plog_{\ell,r}(n)}.$$
\end{claim}
\begin{proof} We may assume $j=2$ does not satisfy the desired bound, as otherwise there is nothing to prove. We will use this to show that there exists some $j\in \{3,\dots, r\}$ which is as large as needed.

By (\ref{eq:edgesofHprime}), (P2) and the definition of $\partial_{12}$ we have:

\begin{equation} \frac{\alpha_r  Kn^{r-1}}{(\log{n})^R} \leq e(H') \leq  |\partial_{12}| (2\Delta_{12}).
\end{equation}

Hence,  \begin{equation}\label{bound:codeg}|\Delta_{12}| \geq  \frac{\alpha_r  Kn^{r-1}}{2|\partial_{12}|(\log{n})^R}.\end{equation} 

Now fix any $u_1\in U_1$. Since there are no isolated vertices in $H'$, there exists $u_2\in U_2$ such that $d_{H'}(u_1,u_2)>0$. By (P2) and~(\ref{bound:codeg}), we obtain:

$$d_{H'}(u_1,u_2)\geq \frac{\Delta_{12}}{2R(r\log{n})^R} \geq \frac{\alpha_r  Kn^{r-1}}{4Rr^R(\log{n})^{2R}|\partial_{12}|}.$$

Recall that $u_1$, $u_2$ are arbitrary vertices in $U_1$ and $U_2$ which are contained in at least one edge. Each of these edges is of form $\{u_1, u_2, u_3, \dots, u_r\}$, where $u_j\in U_j$, for $j=3, \dots, r$. Now some of these $u_j$'s may be repeated, but by thepigeonhole principle there exists $j$ such that for at least $d_{H'}(u_1,u_2)^{1/(r-2)}$   distinct $x\in U_j$, the set  $\{u_1, u_2, x\}$ is contained in some edge. So  for this fixed $u_1\in U_1$ we obtained some $j=j(u_1)\in \{3, \dots r\}$ and a set $X_{j,u_1}\subseteq U_j$ such that $u_1,x_j$ are contained in an edge for each $x_j\in X_{j,u_1}$, and 

$$|X_{j,u_1}|\geq d_{H'}(u_1,u_2)^{1/(r-2)}\geq \left(\frac{\alpha_r  Kn^{r-1}}{4Rr^R(\log{n})^{2R}|\partial_{12}|}\right)^{1/(r-2)}=\left(\frac{ Kn^{r-1}}{\plog_{\ell,r}(n)|\partial_{12}|}\right)^{1/(r-2)}.$$

Once again by the pigeonhole principle, for at least $|U_1|/r$  vertices $u_1\in U_1$, $j(u_1)=j$ is the same. Note that for each $u_1\in U_1$ and $x\in X_{j,u_1}$, we obtain distinct edges $u_1x\in \partial_{1j}$. Clearly $|U_1|>K/\plog_{\ell,r}(n)$ for otherwise 
$e(H')\le |U_1|n^{r-1}< Kn^{r-1}/\plog_{\ell, r}(n)$. Using this and $\partial_{12}\le |U_1|^2$, 

$$|\partial_{1j}|\geq |U_1|\left(\frac{ Kn^{r-1}}{\plog_{\ell,r}(n)|\partial_{12}|}\right)^{1/(r-2)} \geq   |U_1|\left(\frac{ K |U_1|^{r-1}}{\plog_{\ell,r}(n)|U_1|^{2-\frac{1}{r-1}} K^{\frac{1}{r-1}}}\right)^{1/(r-2)} = \frac{K^{\frac{1}{r-1}} |U_1|^{2-\frac{1}{r-1}}}{\plog_{\ell,r}(n)},$$
as we wanted to show.
\end{proof}

By Claim~\ref{keyclaim}, we may assume  without loss of generality that

\begin{equation}\label{eq:partialedges}|\partial_{12}| \geq  \frac{K^{\frac{1}{r-1}} |U_1|^{2-\frac{1}{r-1}}}{\plog_{\ell,r}(n)} \geq \frac{K^{\frac{1}{r-1}} m^{2-\frac{1}{r-1}}}{\plog_{\ell,r}(n)},
\end{equation} 
where $m=|U_1|+|U_2|$.

\begin{claim}\label{claim:degrees}  $\Delta_{12}\geq 8\ell r^{R+1} (\log{n})^R n^{r-3}$. 
%for all $1\leq i<j\leq r$.
\end{claim}
\begin{proof}
By $(\ref{eq:edgesofHprime})$ and (P2),
$$ \frac{\alpha_r  Kn^{r-1}}{(\log{n})^R }\leq e(H')\leq |\partial_{12}| (2\Delta_{12}) \leq 2n^2 \Delta_{12}.$$ 
Also $K \geq K_0 (\log n)^{4R} \ge 8k_0^2\beta_r^{-2} (\log{n})^{4R}$. Hence  
$$ \Delta_{12} \geq \frac{\alpha_r  Kn^{r-3}}{2(\log{n})^R}  
\ge \frac{\alpha_r  8k_0^2\beta_r^{-2} (\log{n})^{4R}n^{r-3}}{2(\log{n})^R} =
   4\alpha_r \beta_r^{-2} k_0^2 (\log{n})^{3R}n^{r-3}
\ge 
8\ell R r^{R+1} (\log{n})^R n^{r-3},$$
where the last inequality follows since $n$ is sufficiently large compared to $\ell, r, k_0$.
\end{proof}

Set $k=\frac{|\partial_{12}|}{m^{1+1/\ell}}$. By (\ref{eq:partialedges}) and $K \geq 8k_0^2\beta_r^{-2} (\log{n})^{2r(r-1)}$, we can guarantee that 
$k \geq k_0,$  thus by Theorem~\ref{thm:morrissaxton} applied with $k$ and $m$, the  shadow graph $\partial_{12}$ contains a collection $\mathcal{F}$  of copies of $C_{2\ell}$ satisfying 

\begin{itemize}
\item[(P3)]  $|\mathcal{F}|\geq \delta k^{2\ell}m^2 ,$
\item[(P4)] $\Delta_j(\mathcal{F})\leq Q  k^{2l-j-\frac{j-1}{\ell-1}}m^{1-\frac{1}{\ell}}$ for all $1\leq j\leq 2\ell-1$. 
\end{itemize}

Let $C$ be any $2\ell$-cycle included in  $\mathcal{F}$ with consecutive edges $x_1x_2\dots x_{2\ell}$ in the natural cyclic ordering. Since  $d_{H'}(x_i,x_{i+1})\geq \Delta_{12}/2R(r\log{n})^R$ for every $1\leq i <j \leq \ell$, it follows  by (P2) that the number of ways to extend $C$ to some linear $2\ell$-cycle in $H'$ is at least
 $$\frac{\Delta_{12}}{2R(r\log{n})^R}\left(\frac{\Delta_{12}}{2R(r\log{n})^R}-(r-2)n^{r-3}\right)\dots \left(\frac{\Delta_{12}}{2R(r\log{n})^R}-(2\ell-1)(r-2)n^{r-3}\right).$$
By Claim~\ref{claim:degrees}, this is at least  $$\left(\frac{\Delta_{12}}{4R(r\log{n})^R}\right)^{2\ell} =\left(\frac{\Delta_{12}}{\plog_{\ell,r}(n)}\right)^{2\ell} . $$
Let $\Ext(C)$ be the collection of all cycles $C_{2\ell}^{(r)}$ obtained from $C$ in this manner. Let $\mathcal{F}' =\{\Ext(C)|C\in \mathcal{F}\}$, so $\mathcal{F}'$ is collection of edge sets of linear $C_{2\ell}^{(r)}$ in $H'$ (and, as in Theorem~\ref{thm:morrissaxton}, $\mathcal{F}'$ can also be viewed as a $2\ell$-graph). By the previous discussion and (P3) and (P4):
\begin{itemize}
\item[(P5)]  $|\mathcal{F}'|\geq \delta k^{2l}m^2 \left(\frac{\Delta_{12}}{\plog_{\ell,r}(n)}\right)^{2\ell},$
\item[(P6)]  $\Delta_j(\mathcal{F}') \leq Q  k^{2\ell-j-\frac{j-1}{\ell-1}}m^{1-\frac{1}{\ell} } (2\Delta_{12})^{2\ell -j}$ for  all $1\leq j\leq 2l-1$.
\end{itemize}
\bigskip

{\bf Part 2. Containers}

Lt ${S}$ be the $2\ell$-graph with $$V(S)=E(G) \qquad \hbox{  and } \qquad E(S) = {E(G) \choose 2\ell} \cap \mathcal{F}'.
$$
In other words, vertices of $S$ are edges of $G$ and edges of $S$ are copies of $C_{2\ell}^{(r)}$  in $\mathcal{F}'$.  The edges in $E(G)\setminus E(H')$ are isolated vertices in ${S}$, and $ \Delta_j(S)\leq \Delta_j(\mathcal{F}') $. Therefore, (P5) and (P6) translate to the properties of $e(S)=|\mathcal{F}'|$ and    $\Delta_j(S) = \Delta_j(\mathcal{F}')$, correspondingly.
%\begin{itemize}
%\item[(P7)]  $e(S)\geq \delta k^{2\ell}m^2  \left(\frac{\Delta_{12}}{4R(r\log{n})^R}\right)^{2\ell} ,$
%\item[(P8)]   $\Delta_j(S) \leq Q  k^{2\ell-j-\frac{j-1}{\ell-1}}m^{1-\frac{1}{\ell}}  (2\Delta_{12})^{2\ell -j}$ for  all $1\leq j\leq 2\ell-1$.
%\end{itemize}

\begin{claim}\label{claim:partialdegrees}$  |\partial_{12}| \Delta_{12} \geq \frac{\alpha_r  Kn^{r-1}}{2(\log{n})^R} $.
\end{claim}
\begin{proof} By (\ref{eq:edgesofHprime}) and (P2), 
$$\frac{\alpha_r  Kn^{r-1}}{(\log{n})^R} \leq e(H') \leq  |\partial_{12}| (2\Delta_{12}),$$ and the claim follows.
\end{proof}

Now let us compute the co-degree  function $\delta(S,\tau)$, for the following choice of $\tau$: 
\begin{equation}\label{eq:tau}\tau=\delta^{-3}\plog_{\ell,r}(n) K^{-1} n^{-(r-2)+\frac{1}{2\ell-1}} .\end{equation}

By the definitions of $H_0$ and $H'$,
$$e(G)\leq \frac{r^{R+r}(\log{n})^R}{r!}e(H_0) \le
\frac{2r^{R+r}(\log{n})^R}{r!}e(H')
\leq \plog_{\ell,r}(n) |\partial_{12}| \Delta_{12},$$

and it follows that 
\begin{equation}\label{eq:avgdegreeS}d(S)=\frac{2\ell e(S)}{e(G)} \geq \frac{ 2\delta \ell k^{2\ell}m^2  \left(\frac{\Delta_{12}}{\plog_{\ell,r}(n)}\right)^{2\ell} }{\plog_{\ell,r}(n) |\partial_{12}|\Delta_{12}} \geq \frac{\delta k^{2\ell-1} \Delta_{12}^{2\ell-1}m^{1-1/\ell} }{\plog_{\ell,r}(n)}.
\end{equation}

where in the  second inequality we used $|\partial_{12}|=km^{1+1/\ell}$ and that $n$ is sufficiently large.

By (\ref{eq:avgdegreeS}) and (P6)  for all $2\leq j \leq 2\ell -1$, we have 

\begin{align*}\left(\frac{\Delta_j(S)}{d(S)}\right)^{1/(j-1)}\tau^{-1}&\leq   \left(\frac{Qk^{2\ell-j-\frac{j-1}{\ell-1}} (2\Delta_{12})^{2\ell -j}\plog_{\ell,r}(n)}{\delta k^{2\ell-1} \Delta_{12}^{2\ell-1}}\right)^{1/(j-1)}\tau^{-1}\\
&\leq  \left(\delta^{-1}\plog_{\ell,r}(n) k^{-(j-1)-\frac{j-1}{\ell-1}} (\Delta_{12})^{-(j-1)}\right)^{1/(j-1)}\tau^{-1}\\
&\leq  \delta^{-1}\plog_{\ell,r}(n) k^{-1-\frac{1}{\ell-1}} (\Delta_{12})^{-1} \tau^{-1} \\
&\leq  \delta^{-1} \plog_{\ell,r}(n) \left(\frac{|\partial_{12}|}{m^{\frac{\ell+1}{\ell}}}\right)^{-\frac{\ell}{\ell-1}}\left(\frac{2|\partial_{12}|}{e(H')}\right)\tau^{-1} \\
%&=   \delta^{-1}\tau^{-1}(\log{n})^{(2\ell+1) R+2} \times %\frac{m^{1 +\frac{2}{\ell-1}}}{(|\partial_{12}|\Delta_{12}) %|\partial_{12}|^\frac{1}{\ell-1}} \\
&\leq  \delta^{-1}\tau^{-1} \plog_{\ell,r}(n) \frac{m^{1+\frac{2}{\ell-1}}|\partial_{12}|^{-\frac{1}{\ell-1}}}{Kn^{r-1}}\\
&\leq  \delta^{-1}\tau^{-1}\plog_{\ell,r}(n) \times \frac{m^{1+\frac{1}{(r-1)(\ell-1)}}}{K^{1+\frac{1}{(r-1)(\ell-1)}}n^{r-1}} \\
&\leq \delta^{-1}\tau^{-1} \plog_{\ell,r}(n) K^{-1-\frac{1}{(r-1)(\ell-1)}}n^{-(r-2)+\frac{1}{(r-1)(\ell-1)}} \\
&<\frac{\delta}{2\ell}.
\end{align*}

Above in the first equality we used  that $|\partial_{12}|=km^{1+1/\ell}$, in the fourth inequality we used (\ref{eq:partialedges}) and  Claim~\ref{claim:partialdegrees},  and in the last inequality we used the definition of $\tau$ and $\delta <1/2\ell$.

As for $j=2\ell$, since $\Delta_{2\ell}\leq 1$, it follows
\begin{align*}\left(\frac{\Delta_{2\ell}(S)}{d(S)}\right)^{1/(2\ell-1)} \tau^{-1}&\leq \left(\frac{\plog_{\ell,r}(n)}{\delta k^{2\ell-1} \Delta_{12}^{2\ell-1}m^{1-1/\ell} }\right)^{1/(2\ell-1)}\tau^{-1} \\
&\leq \tau^{-1}\delta^{-1} \frac{\plog_{\ell,r}(n) m^{1+\frac{1}{\ell}-\frac{\ell-1}{\ell(2\ell-1)}}}{|\partial_{12}|\Delta_{12}}\\
&\leq \tau^{-1}\delta^{-1} \plog_{\ell,r}(n) K^{-1} n^{-(r-2)+\frac{1}{2\ell-1}}\\
&<\frac{\delta}{2\ell},
\end{align*}
where in the second inequality we used  $|\partial_{12}|=km^{1+1/\ell}$ and in the third inequality we used   Claim~\ref{claim:partialdegrees}. So for all $2\leq j \leq 2\ell $,

\begin{equation}\label{ineq:degrees} \left(\frac{\Delta_j(S)}{d(S)}\right)\tau^{-(j-1)}<\left(\frac{\delta}{2\ell}\right)^{j-1}\leq \left(\frac{\delta}{2\ell}\right).\end{equation} 
By (\ref{ineq:degrees}),
$$\delta(S,\tau) = \frac{1}{d(S)}\sum_{j=2}^{2\ell}{\frac{\Delta_{j}(S)}{\tau^{j-1}}}<\delta,$$ and we can therefore apply  Theorem~\ref{thm:MSContainers} to $S$ and obtain a collection of $\mathcal{C}\subseteq  \mathcal{P}(V(S)) = \mathcal{P}(E(G))  $ of at most $$\exp\left(\frac{\tau |V(S)|\log{1/\tau}}{\delta}\right)$$  subsets of $V(S)$ such that

\begin{itemize}
\item [(P7)] for each independent set $I$ in  $S$ there exists a ``container'' $C\in \mathcal{C}$ such that  $I\subseteq C$,
\item [(P8)] $e(S[C])\leq (1-\delta) e(S)$, for each container $C\in \mathcal{C}$.
\end{itemize}

By the choice of $\tau$, and recalling that $\varepsilon=\delta^4$ and $|V(S)|=e(G)=Kn^{r-1}$ we get that
$$\left(\frac{\tau |V(S)|\log{1/\tau}}{\delta}\right) \leq  \exp \left(\frac{\plog_{\ell,r}(n)}{\eps} \max \left \{ n^{1+\frac{1}{(r-1)(\ell-1)}}K^{-\frac{1}{(r-1)(\ell-1)}}, \, n^{1+\frac{1}{2\ell-1}} \right\}\right)$$
 and this proves (\ref{eqn:numcon}). We will now prove statements (i) and (ii) of the theorem.

Each $C_{2\ell}^{(r)}$-free subgraph of $G$ is an  independent set  of $S$, and each $C\in \mathcal{C}$ can be viewed as  a subgraph of $G$. So (P7) implies that for each $C_{2\ell}^{(r)}$-free subgraph $G'$ of $G$  there exists a container $C\in \mathcal{C}$ such that  $G'\subseteq C$. This shows that (i) is true. To conclude, it remains to show that (P8) implies that statement (ii) of the theorem holds.
%\begin{claim}For each $C\in \mathcal{C}$, $|E(C)|\leq %\left(1-\frac{\eps}{(\log{n})^{r^2(\ell+1)}}\right)e(G)$. 
%\end{claim}
%\begin{proof}

Let   $C\in \mathcal{C}$. Recall that by definition, every vertex $v\in V(S)$ is an edge in $E(G)$. For each $e\in E(S-S[C])$ there exists  $v\in e$ such that $v\in V(S)\setminus C= E(G)\setminus E(C)$. Since  $d_{S}(v)\leq \Delta_1(S)$ for every $v \in V(S)$,
\begin{equation}\label{eq:containers} e(S-S[C]) \leq \Delta_1(S) e(G-C).
\end{equation}

On the other hand,  (P5) and (P6) and $\delta$ sufficiently small imply that
\begin{align*}
\frac{e(S)}{\Delta_1(S)} &\geq \frac{\delta k^{2l}m^2  \left(\frac{\Delta_{12}}{4R(r\log{n})^R}\right)^{2\ell}}{Q  k^{2\ell-1}m^{1-\frac{1}{\ell}}  (2\Delta_{12})^{2\ell -1}}\\
&\geq  \frac{2}{Q(8Rr^{R})^{2\ell} } \cdot \frac{\delta km^{1+\frac{1}{\ell}}\Delta_{12}}{(\log{n})^{2\ell R}} \\
&\geq  \frac{\delta^2 km^{1+\frac{1}{\ell}}\Delta_{12}}{(\log{n})^{2\ell R}}.
\end{align*}
 Thus, together with (\ref{eq:containers})  and (P8) it follows that  $$\delta \, e(S) \leq e(S- S[C])\leq \Delta_1(S) \, e(G-C)\leq e(S) \,  \frac{(\log{n})^{2\ell R}}{\delta^2 km^{1+\frac{1}{\ell}}\Delta_{12}} \, e(G-C).$$
Therefore, 

$$e(G-C) \geq \frac{\delta^3 k\Delta_{12} m^{1+\frac{1}{\ell}}}{ (\log{n})^{2\ell R}}\geq  \frac{ \delta^3 (|\partial_{12}| \Delta_{12})}{ (\log{n})^{2\ell R}} \geq \frac{\delta^3 \alpha_r Kn^{r-1}}{2(\log{n})^{(2\ell+1) R}}\geq \frac{\delta^4 e(G)}{(\log{n})^{(2\ell+1) R}}\geq  \frac{\eps \, e(G)}{(\log{n})^{r^2(\ell+1)}} ,$$   where in the second inequality we used the definition of $k$, in the third one we used Claim~\ref{claim:partialdegrees}, in the fourth that $\delta$ is sufficiently small, and the last one we use $\eps=\delta^4$. As $e(C) = e(G)-e(G-C)$, the proof of (ii) is complete. \end{proof}

Let us restate the result that we have just proved.

 \begin{theorem}\label{thm:onestepwhenris4}For every  $r\geq 4$, $\ell\geq 2$, there exist $K_0,n_0\in \mathbb{N}$ and $\eps >0$ such that the following holds for all  $n\geq n_0$ and every $K\geq K_0(\log{n})^{2r^2}$. Given an $r$-graph $G$ with $n$ vertices and $Kn^{r-1}$  edges there exists a collection  $\mathcal{C}$ of at most 
$$\exp\left(\frac{1}{\eps}n^{1+\frac{1}{2\ell-1}} \plog_{\ell,r}(n)  \right)$$
subgraphs of $G$ such that 
\begin{itemize} \item [(i)] every $C_{2l}^{(r)}$-free subgraph of $G$ is a subgraph of some $C\in \mathcal{C}$, 
 \item [(ii)]  $e(C)\leq  \left(1-\frac{\eps}{\plog_{\ell,r}(n) }\right)e(G)$, for each $C\in \mathcal{C}$.
\end{itemize}
\end{theorem}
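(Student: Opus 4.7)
The plan is that Theorem~\ref{thm:onestepwhenris4} is a clean reformulation of Theorem~\ref{thm:onestep} in the uniform $\plog_{\ell,r}(n)$ notation adopted throughout the paper, for the regime $r\geq 4$, $\ell\geq 2$. Its proof amounts to invoking Theorem~\ref{thm:onestep} and matching parameters; no new estimate is needed.

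First, I would verify the hypothesis transfers. Since $2r^2 \geq 2r(r-1)$ for all $r\geq 2$, the assumption $K\geq K_0(\log n)^{2r^2}$ is at least as strong as the assumption $K\geq K_0(\log n)^{2r(r-1)}$ of Theorem~\ref{thm:onestep}. Hence Theorem~\ref{thm:onestep} applies with the same constants $K_0, n_0, \eps$, producing a container family $\mathcal{C}$ that satisfies property (i) of the present statement verbatim.

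Second, I would translate each quantitative conclusion into $\plog_{\ell,r}$ notation. The container count (\ref{eqn:numcon}) of Theorem~\ref{thm:onestep} is exactly $\exp\!\left(\plog_{\ell,r}(n)\,n^{1+1/(2\ell-1)}/\eps\right)$, which, up to reordering the factors inside the exponent, is the bound asserted here. Likewise, the density deficit $e(C)\leq (1-\eps/(\log n)^{r^2(\ell+1)})e(G)$ from conclusion (ii) of Theorem~\ref{thm:onestep} has $(\log n)^{r^2(\ell+1)}$ in the denominator; since $f(\ell,r):=r^2(\ell+1)$ is a positive function of $\ell$ and $r$, this expression fits the template $\plog_{\ell,r}(n)$ and rewrites as $e(C)\leq (1-\eps/\plog_{\ell,r}(n))e(G)$, giving (ii) of the present statement.

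Since no new mathematical content is involved, there is no serious obstacle. The only mildly delicate point is to ensure that, in both conclusions, every polylogarithmic factor that accumulates during the proof of Theorem~\ref{thm:onestep} is a \emph{positive} power of $\log n$, so that absorbing it into $\plog_{\ell,r}(n)$ is legitimate. This is automatic here because the proof of Theorem~\ref{thm:onestep} combines only finitely many factors of the form $(\log n)^{c(\ell,r)}$ with $c(\ell,r)>0$ (arising from the Erd\H{o}s--Kleitman partition step, the dyadic refinement on codegrees, and the regularization step), and sums or products of such factors are again of the form $(\log n)^{f(\ell,r)}$ with $f(\ell,r)>0$.
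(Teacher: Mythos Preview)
Your proposal is correct and matches the paper's approach exactly: in the paper, Theorem~\ref{thm:onestepwhenris4} is introduced with the sentence ``Let us restate the result that we have just proved,'' so it is literally Theorem~\ref{thm:onestep} rewritten in $\plog_{\ell,r}(n)$ notation, with no separate proof. Your observations that $2r^2\ge 2r(r-1)$ handles the slightly different hypothesis on $K$ and that $(\log n)^{r^2(\ell+1)}$ fits the $\plog_{\ell,r}(n)$ template are precisely what is needed to justify the restatement.
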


By iterating this theorem starting with $G=K_{n}^{(r)}$, the complete $r$-graph on $n$ vertices we can show that any $r$-graph on $n$ vertices which is $C_{2\ell}^{(r)}$-free is contained in one of the ``containers'' and there are not many of them.

\begin{theorem}\label{thm:countforr>=4} For every $r\geq 4$, $\ell\geq 2$ there exists  $K_0\in \mathbb{N}$ such that for all sufficiently large $n\in \mathbb{N}$ and every $K\geq K_0(\log{n})^{2r^2}$  there exists a collection $\mathcal{G}_{\ell,r}(n,K)$ of at most 

$$\mathrm{exp}\left(n^{1+\frac{1}{2\ell-1}} \plog_{\ell,r}(n)  \right)$$
 $r$-graphs on vertex set $[n]$ such that  $e(G)\leq Kn^{r-1}$ for each $G\in \mathcal{G}_{\ell,r}(n,K)$,  and every $C_{2\ell}^{(r)}$-free $r$-graph on $[n]$ is a subgraph of some $G\in \mathcal{G}_{\ell,r}(n,K)$. 
\end{theorem}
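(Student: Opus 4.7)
The plan is a standard container tree iteration: starting from $K_n^{(r)}$, apply Theorem~\ref{thm:onestepwhenris4} repeatedly, replacing each current container by its children until every leaf has at most $Kn^{r-1}$ edges. First I would construct a rooted tree $\mathcal{T}$ of $r$-graphs on vertex set $[n]$ as follows. The root is $G_0=K_n^{(r)}$, which has $\binom{n}{r}\le n^r/r!$ edges. Given a node $G$ of the tree, if $e(G)\le Kn^{r-1}$ then $G$ is a leaf; otherwise, write $e(G)=K'n^{r-1}$ with $K'\ge K\ge K_0(\log n)^{2r^2}$ and apply Theorem~\ref{thm:onestepwhenris4} to $G$ to obtain a family of containers, which become the children of $G$ in $\mathcal{T}$. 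The final collection $\mathcal{G}_{\ell,r}(n,K)$ will be the set of leaves.

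Next I would verify the covering property. By property~(i) of Theorem~\ref{thm:onestepwhenris4}, every $C_{2\ell}^{(r)}$-free subgraph of $G$ is contained in some child of $G$. A routine induction on the depth in $\mathcal{T}$ then shows that every $C_{2\ell}^{(r)}$-free $r$-graph on $[n]$, being $C_{2\ell}^{(r)}$-free in $K_n^{(r)}$, lies inside some leaf of $\mathcal{T}$, giving the required covering. The edge bound $e(G)\le Kn^{r-1}$ at the leaves holds by construction.

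The main quantitative step is to bound the depth and the number of leaves. By property~(ii), along any root-to-leaf path the edge count shrinks by a factor of at most $1-\eps/\plog_{\ell,r}(n)$ at each step, so the depth $T$ of $\mathcal{T}$ satisfies
\[
\Bigl(1-\frac{\eps}{\plog_{\ell,r}(n)}\Bigr)^{T}\cdot \frac{n^r}{r!} \;\le\; Kn^{r-1},
\]
which gives $T=O\bigl(\plog_{\ell,r}(n)\log n\bigr)$, i.e., still a polylog in $n$ (with exponent depending on $\ell,r$). At each internal node the branching factor is at most $\exp\bigl(\tfrac{1}{\eps}\,n^{1+1/(2\ell-1)}\plog_{\ell,r}(n)\bigr)$, so the total number of leaves is at most
\[
\exp\!\Bigl(T\cdot\tfrac{1}{\eps}\,n^{1+1/(2\ell-1)}\plog_{\ell,r}(n)\Bigr)
\;=\;\exp\!\Bigl(n^{1+1/(2\ell-1)}\plog_{\ell,r}(n)\Bigr),
\]
where on the right I have absorbed the $T/\eps$ factor and all other polylog contributions into a new $\plog_{\ell,r}(n)$ (whose exponent depends only on $\ell$ and $r$), exactly as permitted by the convention introduced in Section~2.

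I do not expect a serious obstacle: the argument is the standard ``iterate the one-step container lemma'' scheme, and the hypothesis $K'\ge K\ge K_0(\log n)^{2r^2}$ is preserved at every internal node by the stopping rule, so Theorem~\ref{thm:onestepwhenris4} is applicable throughout. The only point requiring a little care is to check that the polylog factor gained from the $T$-fold iteration can be swallowed into $\plog_{\ell,r}(n)$ without spoiling the exponent $1+\tfrac{1}{2\ell-1}$, which is immediate because $T$ itself is polylogarithmic in $n$.
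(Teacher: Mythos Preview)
Your proposal is correct and is essentially the same argument as the paper's proof: the paper iterates Theorem~\ref{thm:onestepwhenris4} through a sequence of collections $\mathcal{C}_0=\{K_n^{(r)}\},\mathcal{C}_1,\dots,\mathcal{C}_m$, which is just your container tree described level by level, with the same stopping rule $e(G)\le Kn^{r-1}$ and the same absorption of the depth into $\plog_{\ell,r}(n)$. Your depth estimate $T=O(\plog_{\ell,r}(n)\log n)$ is in fact a bit more careful than the paper's stated $m=O(\log n)$, but either way the factor is polylogarithmic and disappears into $\plog_{\ell,r}(n)$.
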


\begin{proof} Let $K_0$ and $\eps$ be obtained from Theorem~\ref{thm:onestepwhenris4} applied with $\ell$. We apply Theorem~\ref{thm:onestep} iteratively, each time refining the set of
containers obtained at the previous step.   We start with $\mathcal{C}_0=\left\{K_n^{(r)}\right\}$. For $i\geq 1$, let $$K_{i}=\max\left\{\left(1-\frac{\eps}{\plog_{\ell, r}(n)}\right)^in,K_0(\log{n})^{2r^2}\right\}.$$ At step $t$  we obtain a collection $\mathcal{C}_t$ of $r$-graphs on $[n]$ such that $e(G)\leq K_tn^{r-1}$ for every $G\in \mathcal{C}_t$  and every $C_{2\ell}^{(r)}$-free graph on $[n]$ is a subgraph of some $G\in \mathcal{C}_t$, and moreover,
\begin{equation}\label{count:total}|\mathcal{C}_t|\leq \text{exp}\left( \sum_{i=1}^{t} \frac{1}{\eps} n^{1+\frac{1}{2\ell-1}}\plog_{\ell, r}(n) \right).
\end{equation}

For $i\geq 0$, at  step $i+1$, we apply Theorem~\ref{thm:onestepwhenris4} to each graph $G\in \mathcal{C}_i$ with $e(G)\geq K_{i+1}n^{r-1}$  (note that  $e(G)\leq K_{i}n^{r-1}$, as otherwise $G$ would not have been in $ \mathcal{C}_i$) and obtain a family  $\mathcal{C}(G)$ of subgraphs of $G$ of with the following properties:
\begin{itemize} \item [(a)] every $C_{2l}^{(r)}$-free subgraph of $G$ is a subgraph of some $C\in \mathcal{C}(G)$, 
 \item [(b)]   For each $C\in \mathcal{C}(G)$, $$e(C)\leq \left(1-\frac{\eps}{\plog_{\ell, r}(n)}\right)e(G)\leq    \left(1-\frac{\eps}{\plog_{\ell, r}(n)}\right) K_in^{r-1} \leq K_{i+1}n^{r-1}.$$
\end{itemize}

If $e(G)\leq K_{i+1}n^{r-1}$  we let $\mathcal{C}(G)=\{G\}$. We define $\mathcal{C}_{i+1}=\cup_{G\in \mathcal{C}_i}{\mathcal{C}(G)}.$ Let $m$ be the minimum such that $K_m\leq K$.  We iterate until we obtain $\mathcal{C}_m$. It is easy to check that  $m=O(\log{n})$. This allows us to get the desired bound on the size of the $\mathcal{C}_m$. 
\begin{equation}\label{count:total}|\mathcal{C}_m|\leq \text{exp}\left(\frac{1}{\eps} \sum_{i=1}^{m} n^{1+\frac{1}{2\ell-1}} \plog_{\ell, r}(n) \right)\leq  \text{exp}\left(n^{1+\frac{1}{2\ell-1}} \plog_{\ell, r}(n)  \right).
\end{equation}

To finish the proof, we let $\mathcal{G}_{\ell,r}(n,K)=\mathcal{C}_m$.
\end{proof}

We are ready to state our main result.

\begin{theorem}\label{thm:main} Fix $r\geq 4$ and $\ell\geq 2$. 
Set $p_1= n^{-(r-2)+\frac{1}{2\ell-1}+o(1)}$ then a.a.s. for all $p\geq p_1$, $$\ex\left(G_{n,p}, C_{2\ell}^{(r)}\right)\leq  pn^{r-1}\plog_{r,\ell}(n).$$
\end{theorem}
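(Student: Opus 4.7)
The plan is to combine Theorem~\ref{thm:countforr>=4} with a standard Chernoff-plus-union-bound argument; all the hard work has already been packaged into the container statement, so the deduction is short.

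First I would fix $K = K_0 (\log n)^{2r^2}$, the minimum value allowed by Theorem~\ref{thm:countforr>=4}, and invoke it to obtain a family $\mathcal{G} = \mathcal{G}_{\ell,r}(n,K)$ of at most $\exp\!\bigl(n^{1+1/(2\ell-1)} \plog_{\ell,r}(n)\bigr)$ containers on $[n]$, each with at most $Kn^{r-1}$ edges, such that every $C_{2\ell}^{(r)}$-free $r$-graph on $[n]$ is a subgraph of some $G \in \mathcal{G}$. In particular, any $C_{2\ell}^{(r)}$-free subgraph of $G_{n,p}^{(r)}$ has its edge set inside $E(G) \cap E(G_{n,p}^{(r)})$ for some container $G$, so
$$\ex\!\left(G_{n,p}^{(r)}, C_{2\ell}^{(r)}\right) \le \max_{G \in \mathcal{G}} \bigl|E(G) \cap E(G_{n,p}^{(r)})\bigr|.$$

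Next I would estimate $|E(G) \cap E(G_{n,p}^{(r)})|$ for a single container. This variable is stochastically dominated by $\mathrm{Bin}(Kn^{r-1},p)$, so the Chernoff bound (with $a=1$) gives
$$\Prob\!\left[\bigl|E(G) \cap E(G_{n,p}^{(r)})\bigr| > 2pKn^{r-1}\right] \le 2\exp\!\left(-pKn^{r-1}/3\right),$$
and a union bound over $\mathcal{G}$ shows the estimate fails for some container with probability at most
$$\exp\!\bigl(n^{1+1/(2\ell-1)} \plog_{\ell,r}(n)\bigr)\cdot 2\exp\!\bigl(-pKn^{r-1}/3\bigr).$$
For $p \ge p_1 = n^{-(r-2)+1/(2\ell-1)+o(1)}$ the quantity $pKn^{r-1}$ is at least $K\cdot n^{1+1/(2\ell-1)+o(1)}$, and taking the $o(1)$ term in the exponent of $p_1$ of order $C\log\log n/\log n$ with $C=C(r,\ell)$ sufficiently large lets the $n^{o(1)}$ factor absorb every polylog appearing in the container count. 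Thus the failure probability tends to $0$ and a.a.s.\ every container satisfies $|E(G)\cap E(G_{n,p}^{(r)})|\le 2pKn^{r-1} = pn^{r-1}\plog_{\ell,r}(n)$, giving the theorem for a single $p\ge p_1$.

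To upgrade this to the statement ``a.a.s.\ for all $p\ge p_1$'' I would discretize the range into a geometric sequence $p_1, 2p_1, 4p_1,\dots$ (at most $O(\log n)$ values before reaching $1$), apply the above union bound to this finite set (the extra $O(\log n)$ factor is absorbed harmlessly into $\plog_{\ell,r}(n)$), and interpolate using the standard monotone coupling $G_{n,p}\subseteq G_{n,p'}$ for $p\le p'$. The main obstacle is essentially nonexistent; the only real bookkeeping is the interplay between the polylog factors in the container count and the $n^{o(1)}$ slack built into $p_1$, which is exactly what forces both the $n^{o(1)}$ error in the exponent of $p$ and the $\plog_{\ell,r}(n)$ loss in the bound on $\ex$.
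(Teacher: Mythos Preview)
Your proposal is correct and follows essentially the same route as the paper: fix $K=K_0(\log n)^{2r^2}$, invoke Theorem~\ref{thm:countforr>=4}, and union-bound over the containers to show that a.a.s.\ no container captures more than $pn^{r-1}\plog_{\ell,r}(n)$ edges of $G_{n,p}^{(r)}$. The only cosmetic difference is that the paper phrases the final step as a first-moment bound $|\mathcal{G}|\binom{Kn^{r-1}}{m}p^m\to 0$ rather than Chernoff-plus-union-bound, and it treats a single $p$ rather than discretizing; these are interchangeable and your version is equally valid.
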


\begin{proof}
Let $C$, $K_0$ and $n_0$  be derived from Theorem~\ref{thm:countforr>=4} such that the statement holds. Now let us assume  $p\geq p_1$ and let  $K=K_0(\log{n})^{2r^2}$. By Theorem~\ref{thm:countforr>=4} there exists a collection $\mathcal{G}_{\ell,r}(n,K)$ of $r$-graphs on   vertex set $[n]$ such that $e(G)\leq Kn^{r-1}$ for every $G\in \mathcal{G}_{\ell,r}(n,K)$ and such that  every $C_{2\ell}^{(r)}$-free $r$-graph on $[n]$ is a subgraph of some $G\in \mathcal{G}_{\ell, r}(n,K)$. Furthermore,
$$|\mathcal{G}_{\ell,r}(n,K)|\leq  \text{exp}\left(n^{1+\frac{1}{2\ell-1}} \plog_{\ell, r}(n) \right).$$

Set $m=pn^{r-1} \plog_{r,\ell}(n) $ and suppose that $G_{n,p}$ has a $C_{2\ell}^{(r)}$-free subgraph $H$ with $m$ edges. Then there exists some $G\in \mathcal{G}_{\ell,r}(n,K)$ which contains $H$, in particular $G$ contains at least $m$ edges of $G_{n,p}$. Therefore, 
the expected number of such subgraphs $H$ is at most
$$ |\mathcal{G}_{\ell,r }(n,K)| {Kn^{r-1} \choose m} p^m.$$ 
The choice of $K$ and $p >p_1$ imply that
$$m \gg \max \{\log(|\mathcal{G}_{\ell, r}(n, K))|, \, pKn^{r-1} \}.$$ 
Consequently, 
\begin{align*}
    |\mathcal{G}_{\ell, r}(n,K)| {Kn^{r-1} \choose m} p^m \leq   \left(O(1) \cdot \frac{pK n^{r-1}}{m}\right)^m\longrightarrow 0,
\end{align*}
and the proof is complete.
\end{proof}

\subsection{Lower bounds}
\begin{proposition}\label{prop:lowerbound1} For every $\ell\geq 2, r\geq 3$, if $n^{-r}\ll p\ll  n^{-(r-1)+\frac{1}{2\ell-1}}$, then a.a.s. $G_{n,p}^{(r)}$ contains a $C_{2\ell}^r$-free subgraph with $(1-o(1))p{n \choose r}$  edges.
\end{proposition}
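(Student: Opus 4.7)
The plan is a standard first-moment deletion argument. First, I would show the total edge count concentrates: since $\E[e(G_{n,p}^{(r)})] = p\binom{n}{r}$, and the hypothesis $p \gg n^{-r}$ gives $p\binom{n}{r} \to \infty$, the Chernoff bound stated in the Tools section implies $e(G_{n,p}^{(r)}) = (1 \pm o(1))p\binom{n}{r}$ a.a.s.

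Next, I would bound the expected number of copies of $C_{2\ell}^{(r)}$ in $G_{n,p}^{(r)}$. The $r$-uniform linear cycle $C_{2\ell}^{(r)}$ has $2\ell$ edges and $2\ell(r-1)$ vertices (the $2\ell$ ``core'' vertices from the underlying $2\ell$-cycle, plus $r-2$ new degree-one vertices inserted into each of its $2\ell$ cycle edges), so the expected number of labelled copies in $G_{n,p}^{(r)}$ is of order $n^{2\ell(r-1)} p^{2\ell}$. The upper bound $p \ll n^{-(r-1)+\frac{1}{2\ell-1}}$ is exactly the condition under which this quantity is $o(p n^r)$: rearranging $n^{2\ell(r-1)} p^{2\ell} \ll p n^r$ gives $p^{2\ell-1} \ll n^{r-2\ell(r-1)}$, i.e., $p \ll n^{(r-2\ell(r-1))/(2\ell-1)} = n^{-(r-1)+\frac{1}{2\ell-1}}$. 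Markov's inequality then yields that a.a.s.\ the number of copies of $C_{2\ell}^{(r)}$ in $G_{n,p}^{(r)}$ is $o(p\binom{n}{r})$.

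Finally, I would condition on both events and delete a single edge from each copy of $C_{2\ell}^{(r)}$ found in $G_{n,p}^{(r)}$. The resulting subgraph is $C_{2\ell}^{(r)}$-free and retains at least $(1-o(1))p\binom{n}{r} - o(p\binom{n}{r}) = (1-o(1))p\binom{n}{r}$ edges, as required. No real obstacle appears; the argument works precisely because the stated range of $p$ in the hypothesis coincides with the first-moment threshold for the appearance of $C_{2\ell}^{(r)}$ in $G_{n,p}^{(r)}$, which is why the matching upper bound in Theorem~\ref{thm:mainforratleast4} is sharp in this regime.
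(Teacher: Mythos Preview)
Your proposal is correct and follows essentially the same first-moment deletion argument as the paper: compute $\E[X]=O(n^{2\ell(r-1)}p^{2\ell})$, use the hypothesis on $p$ to see this is $o(p\binom{n}{r})$, apply Markov together with Chernoff-type concentration of $e(G_{n,p}^{(r)})$, and delete one edge from each copy of $C_{2\ell}^{(r)}$. The paper's presentation differs only cosmetically, introducing an explicit auxiliary function $\eps(n)$ to phrase the Markov step.
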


\begin{proof} Note that $\E[e(G_{n,p}^{(r)})]= p{n \choose r} $.  Let ${X}$ denote the number of copies of $C_{2\ell}^{(r)}$ in $G^{(r)}_{n,p}$. Then $$\E[{X}]=O(n^{2\ell(r-1)}p^{2\ell}).$$
%$$\E[{X}]=O\left({n \choose %2\ell(r-1)}\frac{(2\ell(r-1))!}{2\ell}p^{2\ell}.$$
Let   $\omega(n) = \frac{\E[e(G^{(r)}_{n,p})]}{\E[X]}$. By assumption on $p$, we have that $\omega(n)\rightarrow \infty$ as $n\rightarrow \infty$. Let $\eps(n)$ by any function such that $\eps(n)\rightarrow0$ as $n\rightarrow \infty$ but such that $\eps(n)\omega(n)\rightarrow \infty$. Then by Markov's inequality $$\Prob\left[X\geq \eps(n) p {n \choose r }\right]\leq \frac{1}{\eps(n) \omega(n) } \longrightarrow 0.$$  Thus, with high probability $X=o(  p{n \choose r})=o(\E[e(G_{n,p}^{(r)})])$. On the other hand, it is a well known fact that for $p\gg n^{-r}$, the random variable $e(G^{(r)}_{n,p})$ is a binomial variable concentrated around its mean, hence with high probability $X=o(e(G^{(r)}_{n,p}))$. Therefore, with high probability, by deleting one edge from each copy of  $C_{2\ell}^{(r)}$ we will obtain a $C_{2\ell}^{(r)}$-free subgraph of $G_{n,p}^{(r)}$ with $(1-o(1))e(G_{n,p}^{(r)})$  edges. 
\end{proof}

\begin{proposition}\label{prop:lowerbound2} For every $\ell\geq 2, r\geq 3$, if $p\gg n^{-(r-1)}$, then a.a.s. $G_{n,p}^{(r)}$ contains a $C_{2\ell}^{(r)}$-free subgraph with $(1-o(1))p{n-1 \choose r-1}$  edges.
\end{proposition}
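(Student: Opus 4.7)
The plan is to exhibit an explicit $C_{2\ell}^{(r)}$-free subgraph, namely the star at a fixed vertex. Fix any vertex $v\in [n]$ and let $S_v$ denote the collection of edges of $G_{n,p}^{(r)}$ that contain $v$. I claim that $S_v$ is automatically $C_{2\ell}^{(r)}$-free for every $\ell\geq 2$. Indeed, a linear cycle $C_{2\ell}^{(r)}$ with $\ell\geq 2$ has at least $4$ edges, and by linearity every vertex lies in at most $2$ of its edges. However, every edge of $S_v$ contains $v$, so any three edges of $S_v$ share the common vertex $v$, meaning $S_v$ cannot contain a linear cycle as a subgraph.

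Next I will control $|S_v|$. The random variable $|S_v|$ is distributed as $\mathrm{Bin}\!\left(\binom{n-1}{r-1},\,p\right)$, with mean $\mu=p\binom{n-1}{r-1}$. The assumption $p\gg n^{-(r-1)}$ implies $\mu\to \infty$. Choose any $\eps=\eps(n)\to 0$ slowly enough that $\eps^2 \mu\to\infty$; then the Chernoff bound from the Tools section gives
$$\Prob\bigl[\,|S_v|\leq (1-\eps)\mu\,\bigr]\leq 2\exp\!\left(-\tfrac{\eps^{2}}{3}\mu\right)\longrightarrow 0.$$
Thus a.a.s.\ $|S_v|\geq (1-o(1))\,p\binom{n-1}{r-1}$, and $S_v$ provides the desired $C_{2\ell}^{(r)}$-free subgraph. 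There is no real obstacle here; the construction is explicit and the probabilistic estimate is a single application of Chernoff to a binomial variable whose mean tends to infinity in the stated range of $p$.
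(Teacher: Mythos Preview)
Your proof is correct and is essentially identical to the paper's own argument: both take the star $S_v$ of all edges through a fixed vertex $v$, observe that it is $C_{2\ell}^{(r)}$-free, and apply Chernoff to the binomial $|S_v|\sim\mathrm{Bin}\bigl(\binom{n-1}{r-1},p\bigr)$ whose mean tends to infinity when $p\gg n^{-(r-1)}$. If anything, you add the explicit justification that no vertex of a linear cycle lies in more than two of its edges, which the paper leaves implicit.
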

\begin{proof} For any vertex $v$, we have $\E[d(v)]=p { n -1 \choose r-1}$ thus for $p\gg n^{-(r-1)}$ by Chernoff's inequality with high probability, $d(v)=(1+o(1))p { n -1 \choose r-1}$. Therefore, by letting $H$ be the subgraph of $G_{n,p}^{(r)}$ comprising all the edges containing a fixed  vertex $v$ we obtain a $C_{2\ell}^{(r)}$-free subgraph of the required size. 
\end{proof}
\end{section}

\begin{section}{Concluding remarks}
It remains an open problem to determine the full behaviour of  $\ex\left(G_{n,p}^{(r)}, C_{2\ell}^{(r)}\right)$ for $r=3$. The methods in this paper give the following result.
\begin{theorem}\label{thm:mainforr=3}For every $\ell\geq 2$, a.a.s. the following holds: 
$$ \ex\left(G_{n,p}^{(3)}, C_{2\ell}^{(3)}\right) \leq \begin{cases}  p^{\frac{1}{2\ell-1}}n^{1+\frac{2}{2\ell-1}+o(1)}, & \mbox{if } {n^{-1+o(1)}\leq p\leq n^{-1+\frac{1}{2\ell-2}+o(1)}} \\ 
pn^{2+o(1)},& \mbox{if }  p\geq n^{-1+\frac{1}{2\ell-2}+o(1)}.\end{cases}$$
\end{theorem}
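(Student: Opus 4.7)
The plan is to follow the proof of Theorem~\ref{thm:main} with $r=3$, tracking which formulas in the proof of Theorem~\ref{thm:onestep} change upon specialization. No new ideas are needed: the balanced supersaturation in Part~1 of that proof is derived uniformly for all $r \geq 3$, and the container machinery in Part~2 goes through identically. First I would verify that no step tacitly assumed $r \geq 4$. The only concerns are Claim~\ref{claim:degrees}, which for $r=3$ reduces to $\Delta_{12} \geq \plog_{\ell,r}(n)$ since $n^{r-3}=1$ (and this follows from $K \geq K_0(\log n)^{2r(r-1)}$); and the requirement $k=|\partial_{12}|/m^{1+1/\ell}\geq k_0$ needed by Theorem~\ref{thm:morrissaxton}, which holds because Claim~\ref{keyclaim} with $r=3$ gives $|\partial_{12}| \geq K^{1/2}m^{3/2}/\plog_{\ell,r}(n)$, so $k \geq K^{1/2}m^{1/2-1/\ell}/\plog_{\ell,r}(n) \geq k_0$ under the polylog lower bound on $K$.

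With $r=3$ substituted throughout, the exponent $(r-1)(\ell-1)$ appearing in the codegree computation becomes $2\ell-2$, so for each $2 \leq j \leq 2\ell-1$ one obtains
\[
\left(\frac{\Delta_j(S)}{d(S)}\right)^{1/(j-1)}\tau^{-1} \leq \delta^{-1}\plog_{\ell,r}(n)\, K^{-1-1/(2\ell-2)}n^{-1+1/(2\ell-2)}\tau^{-1},
\]
while the $j=2\ell$ bound is still $\delta^{-1}\plog_{\ell,r}(n)\, K^{-1}n^{-1+1/(2\ell-1)}\tau^{-1}$. Setting $\tau$ to the maximum of these and applying Theorem~\ref{thm:MSContainers} yields a one-step container count of
\[
\exp\!\left(\plog_{\ell,r}(n)\cdot\max\left\{K^{-1/(2\ell-2)}n^{1+1/(2\ell-2)},\; n^{1+1/(2\ell-1)}\right\}\right).
\]
Iterating this one-step theorem as in Theorem~\ref{thm:countforr>=4}, where the sum of iterates is dominated by its final term because $K_i$ decreases geometrically, produces for every $K \geq K_0\plog_{\ell,r}(n)$ a collection $\mathcal{G}(n,K)$ of $3$-graphs with at most $Kn^2$ edges and total count given by the same maximum (absorbing the $O(\log n)$ iteration count into the polylog), such that every $C_{2\ell}^{(3)}$-free $3$-graph on $[n]$ sits inside some $G \in \mathcal{G}(n,K)$.

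Finally I would perform the probabilistic step exactly as in Theorem~\ref{thm:main}: if $H \subseteq G_{n,p}^{(3)}$ is $C_{2\ell}^{(3)}$-free with $m$ edges, then $H \subseteq G$ for some $G \in \mathcal{G}(n,K)$, so the expected number of such $H$ is at most $|\mathcal{G}(n,K)|\binom{Kn^2}{m}p^m$, which tends to zero provided $m \gg \max\{pKn^2,\; \log|\mathcal{G}(n,K)|\}$. I would choose $K$ to balance these in each regime. If $p \geq n^{-1+1/(2\ell-2)+o(1)}$, take the smallest admissible $K = K_0\plog_{\ell,r}(n)$; the container count is then $\exp(\plog_{\ell,r}(n)\cdot n^{1+1/(2\ell-2)})$ and is dominated by $m = pn^{2+o(1)}$, recovering the second bound. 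If $n^{-1+o(1)} \leq p \leq n^{-1+1/(2\ell-2)+o(1)}$, equate $pKn^2 = K^{-1/(2\ell-2)}n^{1+1/(2\ell-2)}$ to obtain the optimal choice $K = p^{-(2\ell-2)/(2\ell-1)}n^{-(2\ell-3)/(2\ell-1)+o(1)}$, yielding $m = pKn^2 = p^{1/(2\ell-1)}n^{1+2/(2\ell-1)+o(1)}$, which is the first bound.

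The main obstacle is purely arithmetic bookkeeping: the codegree computation in the proof of Theorem~\ref{thm:onestep} was written in general $r$ and then cleaned up assuming $r \geq 4$, and one must carefully redo the simplification for $r=3$ to extract the two-term maximum above and handle the fact that, unlike the $r \geq 4$ case, the first term in the maximum now matters for a nontrivial range of $K$. Once that is done, no new ideas are needed; the two regimes in the theorem emerge from the optimization over $K$, and the bounds are dictated by the weakness of Claim~\ref{keyclaim} at $r=3$.
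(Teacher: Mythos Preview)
Your proposal is correct and takes essentially the same approach as the paper: rerun the balanced supersaturation and container argument of Theorem~\ref{thm:onestep} with $r=3$, observe that the co-degree bound now produces a two-term maximum $\max\{K^{-1/(2\ell-2)}n^{1+1/(2\ell-2)},\,n^{1+1/(2\ell-1)}\}$ in the container count, iterate, and finish with the union-bound probabilistic step, optimizing $K$ in each regime of $p$. One small imprecision: the phrase ``dominated by its final term because $K_i$ decreases geometrically'' is not quite right, since the decrease factor is $1-\eps/\plog_{\ell,r}(n)$ rather than a fixed constant; the correct bookkeeping is simply that $K_i\geq K$ for every $i$, so each of the $O(\plog_{\ell,r}(n))$ summands is at most the target term, and the extra polylog is absorbed as you note.
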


%Improving our upper bound for the range  $n^{-(r-2)+o(1)} \leq p \leq n^{-(r-2)+\frac{1}{2\ell -2}}$ using hypergraph containers  would require better bounds in Theorem~\ref{thm:onestep}. More specifically, one would need a better balanced supersaturation result. In our balanced supersaturation result ((P5), (P6)) first we find  a corresponding collection of cycles in the shadow graph of the $r$-graph ((P3), (P4)) using Theorem~\ref{thm:morrissaxton}, and then we extend these cycles to $r$-graph cycles in the original $r$-graph. There are examples showing that Theorem~\ref{thm:morrissaxton}, the graph supersaturation result that we use,  is tight. So to improve our balanced supersaturation result one needs  new ideas.

\begin{figure}[H]
  \centering
    \includegraphics[width=.8\textwidth]{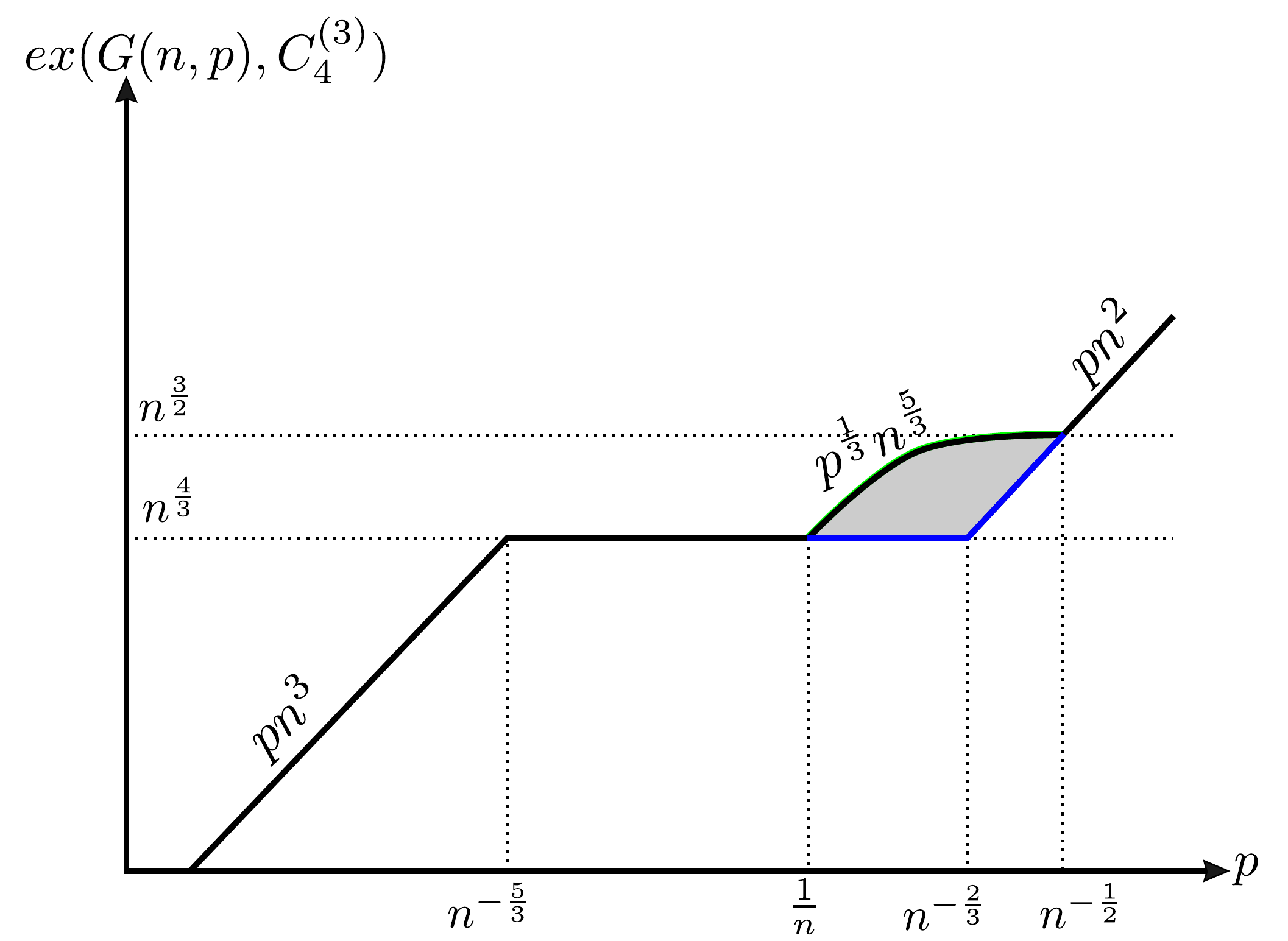}
  \caption{The behaviour of $\ex(G_{n,p}^{(3)}, C_4^{(3)})$ up to polylogarithmic factors}
  \label{c4}
\end{figure}
 Figure 2 shows that we do not know the optimal  behaviour of  $\ex(G_{n,p}^{(3)}, C_{4}^{(3)})$ in the regime $1/n\leq p\leq 1/\sqrt{n}$. We know of various constructions which reach the lower bound $n^{4/3}$ up to polylogarithmic factors  in the regime $n^{-1}\leq p\leq n^{-2/3}$ (see~\cite{MY} for details). This leads  us to conjecture that $n^{4/3}$ is the correct growth rate for this range of $p$.

\begin{conjecture}$$ \ex\left(G_{n,p}^{(3)}, C_{4}^{(3)}\right) = \begin{cases} (1+o(1))e(G_{n,p}^{(3)}), & \mbox{if } {n^{-3}\ll p \ll n^{-5/3}},\\ 
\Theta(n^{4/3+o(1)}), & \mbox{if } {n^{-5/3}\ll p \ll n^{-2/3} },\\
\Theta(pn^{2}),& \mbox{otherwise.} \end{cases}$$
\end{conjecture}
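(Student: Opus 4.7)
The plan is to follow the container-plus-supersaturation framework developed in Section~\ref{proofs}, but with a strengthened balanced supersaturation statement tailored to $C_4^{(3)}$. Several parts of the conjecture already follow from existing tools in the paper: the first regime $n^{-3}\ll p\ll n^{-5/3}$ is handled directly by Proposition~\ref{prop:lowerbound1} together with the trivial upper bound $e(G^{(3)}_{n,p})$, and the lower bound $\Omega(pn^2)$ in the third regime is Proposition~\ref{prop:lowerbound2}. For the middle regime, the lower bound $\Omega(n^{4/3+o(1)})$ comes from the constructions referenced in \cite{MY}. What remains is a matching upper bound of order $n^{4/3+o(1)}$ in the middle regime and an upper bound of order $pn^{2+o(1)}$ in the third regime across the full range $p\gg n^{-2/3}$.

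For those upper bounds I would first reshape the proof of Theorem~\ref{thm:onestep} to the case $r=3$, tracking the exact place the arithmetic breaks. The key obstruction, flagged by the authors in the discussion following Theorem~\ref{thm:onestep}, is Claim~\ref{keyclaim}: the shadow bound $|\partial_{1j}|\ge K^{1/(r-1)}|U_1|^{2-1/(r-1)}/\plog_{\ell,r}(n)$ collapses to only $K^{1/2}|U_1|^{3/2}/\plog_{\ell,r}(n)$ when $r=3$, which is not strong enough to push $\delta(S,\tau)$ below $\varepsilon$ at the target value $\tau\approx K^{-1}n^{-2/3+o(1)}$ dictated by the conjectured $n^{4/3}$ bound. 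The strategy would be to strengthen the shadow estimate using structure specific to $C_4^{(3)}$: two linear $3$-edges meeting at a single vertex already form a length-two linear path whose closures into $C_4^{(3)}$ are controlled by pair-degrees in the $2$-shadow rather than triple-codegrees in the original $3$-graph. Concretely, after the regularization that yields (P1)--(P2), I would perform a second cleanup that discards pairs $(v_1,v_2)\in \partial_{12}$ whose joint co-neighborhood in $U_3$ is abnormally small, and then double-count cherries in the refined $H'$, aiming to prove a mixed inequality of the form $|\partial_{12}|^{a}\Delta_{12}^{c}\ge K^{1+\eta}n^{b}/\plog_{\ell,r}(n)$ with exponents tuned so that the computation of $\delta(S,\tau)$ in Part~2 of Theorem~\ref{thm:onestep} closes at $\tau\approx K^{-1}n^{-2/3+o(1)}$.

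If a shadow bound of this strength is available, the rest of the argument is essentially mechanical: the balanced supersaturation properties (P5)--(P6) then produce a $4$-graph $S$ whose co-degree function satisfies $\delta(S,\tau)<\varepsilon$ at the sharper $\tau$, so Theorem~\ref{thm:MSContainers} yields a container family exactly as before. Iterating as in Theorem~\ref{thm:countforr>=4} and applying the first moment calculation of Theorem~\ref{thm:main} then gives the conjectured upper bound in the middle regime; the third regime follows by combining this with monotonicity in $p$, exactly as in the proof of Theorem~\ref{thm:ourcor}. The main obstacle is genuinely concentrated in the shadow inequality for $r=3$: without a true improvement beyond the pigeonhole argument of Claim~\ref{keyclaim}, no amount of bookkeeping elsewhere can recover the exponent $4/3$, and identifying the correct structural input (whether from the geometry of $C_4^{(3)}$, a refined regularization, or a new shadow inequality for partite $3$-graphs with bounded pair-codegree) is the heart of the conjecture.
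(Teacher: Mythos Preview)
The statement you are addressing is a \emph{conjecture}, not a theorem: the paper contains no proof of it. The authors state explicitly that they do not know the optimal behaviour of $\ex(G_{n,p}^{(3)},C_4^{(3)})$ in the range $n^{-1}\le p\le n^{-1/2}$, and pose the middle line of the display as an open problem. So there is no ``paper's own proof'' to compare against; what one can compare is your proposed strategy against the paper's discussion of where its method fails.

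On that score you have read the obstruction correctly. The paper already supplies the first regime via Proposition~\ref{prop:lowerbound1}, the star lower bound $\Omega(pn^2)$ via Proposition~\ref{prop:lowerbound2}, and the $n^{4/3}$ lower bound for $n^{-1}\le p\le n^{-2/3}$ via the constructions in \cite{MY}; and you correctly isolate Claim~\ref{keyclaim} as the single point at which the container machinery loses the exponent when $r=3$. However, your proposal is a research outline, not a proof: the crucial step --- an improved shadow/codegree inequality strong enough to drive $\tau$ down to roughly $K^{-1}n^{-2/3}$ --- is precisely what is missing, and you yourself concede this in your last paragraph. The specific mechanism you sketch (a second cleanup discarding small co-neighbourhoods in $U_3$ followed by a cherry double-count) is plausible-sounding but unsupported; you do not exhibit exponents $a,b,c,\eta$ for which the asserted inequality actually holds, nor any reason why the $C_4^{(3)}$-specific structure forces it. The authors' acknowledgments note that Nie's approach to $r=3$ goes in a rather different direction (a dichotomy between higher $k$-shadows and a greedy cycle-building in the style of \cite{BNS}), which suggests that a straightforward sharpening of Claim~\ref{keyclaim} alone may not suffice. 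In short: your diagnosis matches the paper's, but the cure remains to be found.
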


We refer the reader to~\cite{MY} for the full details in the case $r=3$.

\end{section}
\bigskip

{\bf Acknowledgments.} 
We wish to thank Jozsi Balogh, Wojtek Samotij and Rob Morris for several fruitful discussions about this problem. This work is based on the preprint~\cite{MY}. The main difference lies in an improved Claim~\ref{keyclaim}. We would like to thank Gwen McKinley for noticing that this claim was suboptimal and for sharing her ideas on how to improve   it for a certain range of $r$ and $\ell$, and for carefully reading  the preprint~\cite{MY}.  Recently we learned that Jiaxi Nie~\cite{Nie} obtained similar results for $r\geq 4$ as ours and also improved our bounds on $r=3$, $\ell\geq 2$. His result for $r\geq 4$ a priori employs  our methodology  from our previous preprint~\cite{MY}, that is, the regularization method and studying the largest $2$-shadow of the underlying $r$-graph, however, instead of improving Claim~\ref{keyclaim} as we do here, he  employs a further dihcotomic technique of either using $k$-shadows for $k>2$  or building the cycles in a greedy fashion similar to  the method of Balogh, Narayanan, Skokan~\cite{BNS}.

\end{document}